\newtheorem{mtheorem}{Theorem}
\newtheorem{theorem}{\textbf{Theorem}}[section]
\newtheorem{claim}[theorem]{\textbf{Claim}}
\newtheorem{corollary}{\textbf{Corollary}}
\newtheorem{definition}[theorem]{\textbf{Definition}}
\newtheorem{lemma}[theorem]{\textbf{Lemma}}
\newtheorem{remark}[theorem]{\textbf{Remark}}
\newtheorem{assumption}{\textbf{Assumption}}
\newtheorem*{question*}{\textbf{Question}}
\theoremstyle{plain}
\newcommand{\eqdef}{\stackrel{\scriptscriptstyle\rm def}{=}}
\definecolor{Red}{cmyk}{0,1,1,0}
\definecolor{Blue}{cmyk}{1,1,0,0}
\newtheoremstyle{example}
  {}
  {}
  {}
  {}
  {\itshape}
  {.}
  {.5em}
  {\thmname{#1}\thmnumber{ #2}\thmnote{ (#3)}}
\theoremstyle{example}
\numberwithin{equation}{section}
\title[Random iterations of maps]
{Random iterations of maps on $\mathbb{R}^{k}$: asymptotic stability, synchronization  and functional central limit theorem}
\author[E. Matias and E. A. Silva]{Edgar Matias and Eduardo Silva }
\address{Departamento de Matem\' atica, Universidade Federal da Bahia, Av. Adhemar de Barros s/n, 40170-110 Salvador, Brazil}
\email{edgar.matias@ufba.br}
\address{Departamento de Matemática, Universidade de Bras\'ilia, 70910-900, Bras\'ilia, Brazil}
\email{e.a.silva@mat.unb.br} 
\begin{document}

 
 
 \begin{abstract}
 We study independent and identically distributed  random iterations of continuous maps defined on a connected closed subset $S$ of the Euclidean space $\mathbb{R}^{k}$. We assume the maps are monotone (with respect to a suitable partial order) and a ``topological'' condition on the maps. Then, we prove the existence of a pullback random attractor whose distribution is the unique stationary measure of the random iteration, and we obtain the synchronization of random orbits. As a consequence of the synchronization phenomenon, a functional central limit theorem is established.  
 
 \end{abstract} 
 
\begin{thanks}{
 E. Matias and E. A. Silva  are  supported by FAPDF. The authors thank Mariela Pent\' on for her useful comments.}\end{thanks}
\keywords{Random dynamical systems, synchronization, stationary measures, central limit theorems}
\subjclass[2010]{60J05, 60F05, 37C70} 
 
%
 
%

\maketitle
\section{Introduction}
In this paper, we study Markov chains through its embedding on a discrete random dynamical system with white noise. This approach goes back to Furstenberg in his pioneer work on random products, see \cite{Fur}.
 Namely, let $X=\{X_{n}\}$ be an independent and identically distributed (i.i.d.) sequence of random variables taking values on  a measurable space $E$ and consider a family $\{f_{\alpha}\}_{\alpha\in E}$ of maps $f_{\alpha}\colon S\to S$. Under appropriate measurability assumptions,  these two ingredients specify a homogeneous Markov chain with state space $S$ given by 
\begin{equation}\label{i.i.d.}
Z_{n}=f_{X_{n-1}}\circ \dots \circ f_{X_{0}}(Z_{0})
\end{equation}
called an \emph{i.i.d. random iteration of maps}, where $Z_{0}$ is a random variable independent of $X=\{X_{n}\}$ taking values 
on $S$.  It is well known that any Markov chain on a standard measurable space admits a representation as in \eqref{i.i.d.}, see Kifer \cite{Kifer}. 
%
%

In this setting, Markov chains have been extensively investigated. It is worth mentioning that there is no unifying technique to be applied in this study. They vary according to the class of maps chosen.  A notable example is the seminal work of Hutchinson \cite{Hu}. Therein, the author considers a finite number of contractions maps on a complete metric space and shows that the induced transition probability is asymptotically stable, i.e., that there exists a unique initial distribution (the distribution of $Z_{0}$) for which the Markov chain $Z_{n}$ is stationary, and under every initial distribution the sequence $Z_{n}$ converges in distribution to the stationary measure. The topological support of the stationary measure is a compact set called the \emph{Hutchinson attractor} and plays a fundamental role in the study of fractals. The development of this theory has found applications in diverse areas such as computer graphics, image compression, and fractal theory. 

From the probabilistic point of view, the results of the ``Hutchinson theory'' were generalized for contracting on average systems. Namely, the asymptotic stability was proved for i.i.d. random iterations of finitely many Lipschitz maps having negative extremal Lyapunov exponent in \cite{Bar1}. Later a generalization for infinitely many maps was given in \cite{Diaconis}. See also \cite{placedependent} for a generalization for iterated function systems with place dependent probabilities and \cite{Stark} for related results on stationary random iterations of Lipschitz maps. Also, we observe that a central limit theorem (CLT) was obtained in some cases, see  \cite{Benda, Stenflo, Biao}.
  
  In this paper, we study statistical properties of $Z_{n}$ for a certain class of continuous monotone maps on $\mathbb{R}^{k}$ with respect to suitable partial orders. Two of our main results are asymptotic stability and a functional central limit theorem (FCLT), see Theorems \ref{pullback} and \ref{functionalclt}. We observe that  an analysis of extremal Lyapunov exponents does not take place here. Indeed, we do not even assume the maps are Lipschitz. 

   The study of i.i.d. random iterations of monotone maps goes back to Dubins and Friedman \cite{Dubins}, where they consider continuous monotone maps on the unit interval and prove the asymptotic stability assuming a condition called \emph{splitting condition}. A generalization of this result in higher dimensions was given in \cite{BhLe} for  a class of non-decreasing continuous maps (w.r.t. the componentwise order) defined on a closed subset $S\subset\mathbb{R}^{k}$. Therein, the authors also state a functional central limit theorem in higher dimensions. However, the proof presented for the FCLT works only on dimension one. A corrected proof was given two decades later in \cite{Bhbook} under a stronger hypothesis than the previous one.  These results have found applications in mathematical economics and nonlinear autoregressive models, see \cite{Bhbook, BhLe}.

  Our paper partially follows the same line of \cite{BhLe} in the sense that the asymptotic stability and the FCLT  are also stated for an appropriate extension for higher dimensions of the splitting condition introduced in \cite{Dubins}. We consider a generalization of the strict componentwise order for which the induced family of monotone maps widely enlarge the class of monotone maps with respect to the strict componentwise order. However, the ideas from \cite{BhLe} can not be adapted to this context. To overcome this situation we follow a dynamical system approach. It is noteworthy that we established an FCLT for a family consisting of both increasing and decreasing maps, while in \cite{Bhbook} the techniques work only for families of non-decreasing maps with respect to the componentwise order.

Our strategy consists in studying the topological behavior of  $Z_{n}$ and its ``dual'' $\hat Z_{n}$, a sequence of random variables defined by taking reverse order iterations
$$
\hat Z_{n}=f_{X_{0}}\circ \dots \circ f_{X_{n-1}}(Z_{0}).
$$
A relation between asymptotic stability and the topological convergence of the sequence $\hat Z_{n}$ is given by a well-known result called the Letac principle.  See \cite{Letac} for more details.  On the other hand, the functional central limit theorem is obtained through an analysis of the dynamical behavior of $Z_{n}$. More precisely, we investigate the \emph{synchronization phenomenon} for i.i.d. random iterations of monotone maps, see Theorem \ref{sync}.
 
  The synchronization phenomenon was first observed  by  Huygens \cite{Huygens} in the movement of two pendulum clocks hanging from a wall and since then has been investigated in several areas, see \cite{syncrhonizationbook}.
 For random iterations, results on synchronization were obtained in several settings
 where no contraction-like property is given a priori, see \cite{Antonov, Goro,Homburg, KlepAntonov,Malicet}.

  The synchronization in a random environment manifests itself in several ways, whether local, in mean, or global. In this paper, the synchronization appears globally, meaning that the topological behavior of $Z_{n}^{x}$ is asymptotically the same for all $x\in S$, where  $Z_{n}^x$ denotes the Markov chain in \eqref{i.i.d.} with initial distribution $Z_{0}=x$.
 In other words, for every pair $x,y\in S$, the i.i.d. random iterations $Z_{n}^{x}$ and $ Z^{y}_{n}$  satisfy
$$
d(Z^{x}_{n},  Z^{y}_{n})\underset{n\to \infty}\longrightarrow 0
$$
with probability $1$, where $d$ is the distance in $S$.

   The synchronization of i.i.d. random iterations, or some contraction property, usually leads to a central limit theorem (CLT). In \cite{Benda, Biao},  central limit theorems are obtained for a certain class of i.i.d. random iterations of Lipschitz maps having negative Lyapunov exponent.  For an i.i.d. random iteration of homeomorphisms on the circle, Malicet in \cite{Malicet} shows that a local  
synchronization holds under the assumption that the maps do not have an invariant measure in common. This result was later used to prove a CLT in \cite{Annaa}. In \cite{Stenflo}, a central limit theorem is obtained for contractive iterated function systems with place-dependent probabilities.

Most of the synchronization properties in the literature are derived from Lyapunov exponents, with a suitable definition in each setting. However, in some situations Lyapunov exponents play no role. For instance,  this is the case of the i.i.d. random iteration of double rotations studied in \cite{Goro}, where the authors obtain a synchronization on average using properties of simple random walks on $\mathbb{Z}$. In our paper, we also
 do not use Lyapunov exponents. Instead,  we explore our extension of  the splitting condition to show the shrinking of the entire space under the action of the random compositions of the monotone maps.
 
Finally, for the class of i.i.d. random iterations of monotone maps that we consider in this paper, as a simple consequence of our study of the sequence $\hat Z_{n}$, we characterize stationary measures  as the distribution of pullback random attractors, a notion of random attractors introduced by Crauel and Flandoli in \cite{CrauelFranco}.

 We refer to \cite{Recurrent, EdgarD,Volk} for related results for Markovian random iterations of finitely many maps.

\subsection*{Organization of the paper}

In Section \ref{mainn} we state precisely the main definitions and results of this work. In Section \ref{p1} we prove Theorem \ref{pullback} and Corollary \ref{Cor1}. In Section \ref{p2} we prove Theorem \ref{sync} and Corollaries \ref{expofastW} and  \ref{Cor2}. Theorem \ref{functionalclt} is proved in Section \ref{p3}.
 
\section{Statements of results}\label{mainn}

\subsection{General setting}\label{GS}

Let $(E,\mathscr{F}, \nu)$ be a probability space and consider an i.i.d. sequence of random variables $X=\{X_{n}\}$ with state space $E$. Throughout, $\nu$ denotes the common distribution of $X$ and $(\Omega, \mathscr{F},\mathbb{P})$ is the probability space where $X$ is defined.
Let $S\subset \mathbb{R}^{k}$ be a connected subspace and  consider a measurable map $f\colon E\times S\to S$. We denote by $f_{\alpha}$ the map $f_{\alpha}(x)=f(\alpha,x)$.
For every random variable $Z_{0}\colon \Omega\to S$ independent of $X$, the pair $(f,X)$ induces a homogeneous Markov chain $Z_{n}$ as defined in \eqref{i.i.d.}, whose transition probability is given by
\begin{equation}\label{tp}
p(x, A)=\int \mathds{1}_{A}(f_{\alpha}(x))\, d\nu(\alpha).
\end{equation}
The pair $(f,X)$, as well as any of its induced Markov chain, will be called an \emph{i.i.d. random iteration of maps}. 

Now, let us introduce the class of maps that we study in this paper. 
Consider a subset $J\subset\{1,\dots, k\}$ and define a partial order as follows: given $x,y\in \mathbb{R}^{k}$, we write $x<_{J} y$ if and only if 
$$
x_{i}<y_{i} \,\,\, \mbox{ for }\,\,\, i\in J \,\,\,\mbox{ and }\,\,\, x_{i}>y_{i} \,\,\, \mbox{ for }\,\,\, i\notin J.
$$
Note that if $J=\{1,\dots, k\}$, then the induced order is the well-known strict componentwise order.

 We  
say that a map $f\colon S\to S$  is  $J$-\emph{increasing}  if 
$$
x<_{J}y \Rightarrow f(x)<_{J}f(y).
$$
Similarly, we say that $f$ is $J$-\emph{decreasing} if 
$$
x<_{J}y \Rightarrow f(y)<_{J}f(x).
$$
The map $f$ is called  $J$-\emph{monotone} if $f$ is either  $J$-increasing or $J$-decreasing.

Given subsets $S_{1}, S_{2}\subset \mathbb{R}^{k}$, we write $S_{1}<_{J} S_{2}$ if  
$$
x<_{J}y\,\,\, \mbox{for every} \,\,\, x\in S_{1}\,\,\,\mbox{and}\,\,\, y\in S_{2}.
$$ 
Let $\pi_{s}\colon \mathbb{R}^k\to \mathbb{R}$ be the natural projection $\pi_{s}(x)=x_{s}$, $s=1,\dots, k$. 

\begin{remark}\label{capempty}
\emph{
A crucial property of the partial order $<_{J}$ for the proofs of our main theorems is the following:
   if $S_{1}<_{J} S_{2}$, then 
$$
\pi_{s}(S_{1})\cap \pi_{s}(S_{2})=\emptyset
$$
for every $s=1,\dots, k$. In other words, the respective projections of $S_{1}$ and $S_{2}$ are disjoints.  In particular, for every $J$-monotone map $f$ we have that $f(S_{1})$ and $f(S_{2})$ have disjoints projections because either $f(S_{1})<_{J}f(S_{2})$ or $f(S_{2})<_{J}f(S_{1})$.} 
\end{remark} 
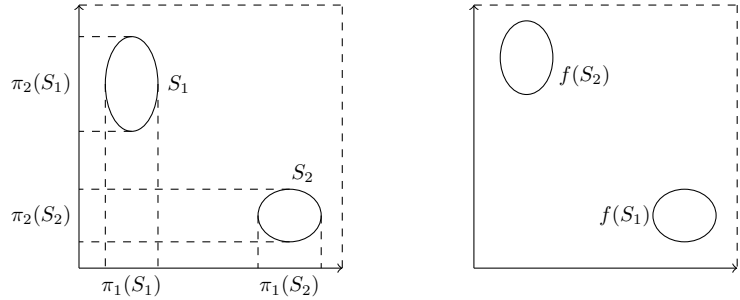
\begin{figure}[h!]
\centering
\begin{tikzpicture}[xscale=3.5,yscale=3.5]
\draw[->] (0,0)-- (1,0);
\draw[->] (0,0)--(0,1);
\draw (0.2,0.7) ellipse (1mm and 1.8mm);
\draw (0.8,0.2) ellipse (1.2mm and 1mm);
\draw[dashed] (0,1)--(1,1);

\draw[dashed] (1,0)--(1,1);
\draw[dashed] (0.2,0.88)--(0,0.88);
\draw[dashed] (0.2,0.52)--(0,0.52);

\draw[dashed] (0.8,0.3)--(0,0.3);
\draw[dashed] (0.8,0.1)--(0,0.1);

 \node[scale=0.8,left] at (0.45,0.7) {$S_{1}$};
 
 \node[scale=0.8,above] at (0.85,0.3) {$S_{2}$};
 
 \node[scale=0.8,left] at (2.2,0.2) {$f(S_{1})$};
 
 \node[scale=0.8,below] at (1.92,0.8) {$f(S_{2})$};

 \node[scale=0.8,below] at (0.8,0) {$\pi_{1}(S_{2})$};
\node[scale=0.8,below] at (0.2,0) {$\pi_{1}(S_{1})$};

 \node[scale=0.8,left] at (0,0.2) {$\pi_{2}(S_{2})$};
\node[scale=0.8,left] at (0,0.7) {$\pi_{2}(S_{1})$};

\draw[dashed] (0.1,0.7)--(0.1,0);
\draw[dashed] (0.3,0.7)--(0.3,0);

\draw[dashed] (0.68,0.2)--(0.68,0);
\draw[dashed] (0.92,0.2)--(0.92,0);
\draw[->] (1.5,0)-- (2.5,0);
\draw[->] (1.5,0)--(1.5,1);

\draw (1.7,0.8) ellipse (1mm and 1.4mm);
\draw (2.3,0.2) ellipse (1.2mm and 1mm);

\draw[dashed] (1.5,1)--(2.5,1);
\draw[dashed] (2.5,1)--(2.5,0);
\end{tikzpicture}
\caption{The action of a $J$-decreasing map on $\mathbb{R}^{2}$ for $J=\{1\}$. Take, for instance, $f(x,y)=(\arctan(y-x), e^{x-y})$.}
\label{f.nonregular}
\end{figure}

In this paper, we study i.i.d. random iterations of $J$-monotone maps satisfying the following ``topological'' property:    
\begin{definition}\label{Definition1}
\emph{ We say that an i.i.d. random iteration $(f,X)$ of $J$-monotone maps satisfies the $J$-\emph{splitting condition} if 
 there are $m\in \mathbb{N}$ and measurable subsets $A$ and $B$ of $E^m$ with 
$\nu^m(A)>0$ and $\nu^m(B)>0$ such that for every $(\alpha_{0},\dots, \alpha_{m-1})\in A$ and 
$(\beta_{0},\dots, \beta_{m-1})\in B$ we have
$$
 f_{\alpha_{0}}\circ\dots  \circ  f_{\alpha_{m-1}}(S)<_{J} f_{\beta_{0}}\circ\dots  \circ f_{\beta_{m-1}}(S).
 $$}
\end{definition}
This condition is a generalization of the splitting condition introduced in \cite{Dubins} and is closely related to the splitting condition considered in \cite{Bhbook, BhLe}, see the discussion in Section \ref{S.stationary}.

\subsection{Stationary measures}\label{S.stationary}
 Let $p$ be the transition probability defined in \eqref{tp}. Associate with 
 $p$ there is an operator acting on the space of probability measures on $S$ given by
 $\mu\mapsto T\mu$, where $T\mu$ is the probability measure defined by   
 $$
T\mu(A)=\int p(x,A)\, d\mu(x)
$$ 
for every Borel set $A\subset S$.
A fixed point for $T$ is called a \emph{stationary measure}. In other words, a probability measure $\mu$ on $S$ is a stationary measure if 
 \[
 \mu(A)=\int p(x,A)\, d\mu(x) 
 \]
 for every Borel set $A\subset S$.  We say that $T$ is \emph{asymptotically stable} if there is a stationary measure $\mu$ such that for every probability measure $\varsigma$ we have that $T^{n}\varsigma$ converges to $\mu$ in the weak-star topology.

 Results on the stability of Markov operators for i.i.d. random iterations of monotone continuous maps go back to Dubins and Freedman \cite{Dubins}. Therein,  asymptotic stability is proved for Markov operators associated with  i.i.d. random iterations of monotone maps on $[0,1]$ satisfying a condition called \emph{splitting condition}: 
there are $x_{0}\in \mathbb{R}$ and $m\geq 1$ such that 
\begin{equation}\label{bata}
\mathbb{P}(Z_{m}^x\leq x_{0}\,\, \forall x)>0 \quad \mbox{and}\quad \mathbb{P}(Z_{m}^x\geq x_{0}\,\, \forall x)>0.
\end{equation}

 This result was generalized in higher dimensions in \cite{Bhbook,BhLe} for i.i.d. random iterations of monotone maps defined on a closed subset $S\subset \mathbb{R}^{k}$ satisfying the multidimensional analog of the splitting condition, where the total order $\leq$ of $\mathbb{R}$ is replaced by the componentwise order on $\mathbb{R}^k$. Recall that the componentwise order is a partial order $\leq $ defined by: $x\leq y$ if and only if $x_{i}\leq y_{i}$ for every $i=1,\dots ,k$. It is not difficult to see that in the case $<_{J}$ is the strict componentwise order (i.e., $J=\{1,\dots, k\}$), a $J$-monotone map is also a monotone map with respect to the componentwise order and an i.i.d. random iteration of $J$-monotone maps satisfying the $J$-splitting condition also satisfy the splitting condition with respect to the componentwise order $\leq$ as defined in \eqref{bata}.

 Our first result states the asymptotic stability of Markov operators associated with i.i.d. random iterations of $J$-monotone continuous maps satisfying the $J$-splitting condition, extending the results on asymptotic stability in \cite{Bhbook,BhLe} for a considerable class of monotone maps.
 

  In what follows, 
 for any measurable map $\pi\colon \Omega\to S$ we denote by $\pi\mathbb{P}$ the image of $\mathbb{P}$ by $\pi$, i.e., the probability measure on $S$ given by $\pi\mathbb{P}(A)=\mathbb{P}(\pi^{-1}(A))$ for every Borel set $A\subset S$. The probability measure $\pi\mathbb{P}$ is also called the distribution of $\pi$.

%
%

\begin{mtheorem}\label{pullback}
Let $S$ be a connected closed subset of $\mathbb{R}^k$ and let  $(f,X)$ be an i.i.d. random iteration of $J$-monotone continuous maps on $S$ satisfying the $J$-splitting condition. Then
\begin{enumerate}
\item [(i)]
 There is a measurable map $\pi\colon \Omega\to S$ such that
for $\mathbb{P}$-almost every $\omega$ we have 
\[
\pi(\omega)=\lim_{n\to \infty} f_{X_{0}(\omega)}\circ\dots\circ f_{X_{n}(\omega)}(x)
\]
for every $x\in S$.
\item[(ii)]The probability measure $\pi\mathbb{P}$ is the unique stationary measure and  for every probability measure $\varsigma$ on $S$ we have
$$
T^n\varsigma\to \pi\mathbb{P}
$$
in the weak-star topology, where $T$ is the Markov operator.
\end{enumerate}

\end{mtheorem}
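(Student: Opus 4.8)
The strategy will follow the dynamical (Letac) route announced in the introduction: control the reverse-order compositions $F^{\omega}_n:=f_{X_0(\omega)}\circ\dots\circ f_{X_n(\omega)}$ and then transfer the information to the Markov operator $T$. Since every $f_\alpha$ maps $S$ into $S$, the images $F^{\omega}_n(S)$ form a nested decreasing family of subsets of $S$, so $\operatorname{diam}F^{\omega}_n(S)$ is non-increasing in $n$. Two soft observations reduce part (i) to a single estimate. First, measurability is automatic: fixing a base point $x_0\in S$, the maps $\omega\mapsto F^{\omega}_n(x_0)$ are measurable, hence so is their pointwise limit, which takes values in $S$ because $S$ is closed. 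Second, if $\operatorname{diam}F^{\omega}_n(S)\to0$ for $\mathbb{P}$-almost every $\omega$, then for every $x\in S$ the points $F^{\omega}_n(x)$ form a Cauchy sequence and all converge to the same limit $\pi(\omega)$, \emph{uniformly in $x$}, since $F^{\omega}_n(x)$ and $F^{\omega}_m(x)$ both lie in $F^{\omega}_{\min(n,m)}(S)$. Thus all of (i) follows once we show that $\operatorname{diam}F^{\omega}_n(S)\to0$ almost surely.

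I would prove this claim one coordinate at a time. Connectedness of $S$ and continuity of the maps make each $F^{\omega}_n(S)$ connected, so $\pi_s\bigl(F^{\omega}_n(S)\bigr)$ is an interval; these intervals are nested, so their lengths decrease to a limit $\ell_s(\omega)\ge0$, and it suffices to prove $\ell_s=0$ almost surely for each $s\in\{1,\dots,k\}$. Here the $J$-splitting condition enters. By independence and the second Borel--Cantelli lemma, for $\mathbb{P}$-almost every $\omega$ infinitely many of the disjoint length-$m$ blocks of coordinates satisfy $(X_{jm},\dots,X_{(j+1)m-1})\in A$, infinitely many satisfy the same with $B$ in place of $A$, and moreover a pair of consecutive such blocks, one of type $A$ and the next of type $B$, occurs infinitely often (and likewise with the roles swapped). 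On such a configuration one invokes Remark \ref{capempty} together with the $J$-monotonicity of the surrounding compositions: the type-$A$ block confines all subsequent $F^{\omega}_n(S)$ to a region whose $s$-projection omits a piece produced by $\{f_\beta(S):\beta\in B\}$, while the type-$B$ block forces it to omit a piece produced by $\{f_\alpha(S):\alpha\in A\}$; arranging the two so that the omitted pieces are nondegenerate and disjoint, the nested intervals $\pi_s(F^{\omega}_n(S))$ permanently lose a subinterval of positive length, and iterating along the infinitely many such configurations drives $\ell_s(\omega)$ to $0$.

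The main obstacle is exactly this quantitative step. Since $S$ may be unbounded and the maps are not assumed Lipschitz, there is no a priori uniform lower bound on the length of the omitted subinterval, so the collapse is not automatic. I would handle it by first using the $J$-splitting condition to show that, almost surely, the sets $F^{\omega}_n(S)$ eventually become bounded --- each $f_\alpha(S)$ with $\alpha\in A$ lies coordinatewise entirely on one side of the fixed nonempty set $f_{\beta}(S)$ for a chosen $\beta\in B$, and symmetrically, so that once blocks of both types have occurred the projections are trapped between finite thresholds --- and only then running the collapse argument on the resulting bounded nested intervals, where the $<_J$-disjointness supplied by Remark \ref{capempty} can be upgraded, by compactness of the closed bounded sets involved, to a definite proportional gap. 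Making this transition rigorous, while checking that it genuinely uses nothing beyond continuity of the maps, is where I expect the real work to lie.

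Part (ii) should then follow from the structure obtained in (i) by a standard Letac-type argument. Writing $\theta$ for the shift with $X_k\circ\theta=X_{k+1}$, the identity $F^{\omega}_n=f_{X_0(\omega)}\circ F^{\theta\omega}_{n-1}$ and continuity of $f_{X_0(\omega)}$ give $\pi(\omega)=f_{X_0(\omega)}\bigl(\pi(\theta\omega)\bigr)$ almost surely. Since $X_0$ is independent of $(X_1,X_2,\dots)$, while $\pi\circ\theta$ is a function of the latter having the same law as $\pi$, this identity says precisely that if $W\sim\pi\mathbb{P}$ and $\alpha\sim\nu$ are independent then $f_\alpha(W)\sim\pi\mathbb{P}$; that is, $\pi\mathbb{P}$ is stationary. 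For the convergence, let $\varsigma$ be any initial distribution and take $Z_0\sim\varsigma$ independent of $X$; since an i.i.d.\ sequence is exchangeable, the reversed composition $f_{X_0}\circ\dots\circ f_{X_{n-1}}(Z_0)$ has the same distribution as $Z_n$, namely $T^n\varsigma$, and by (i) --- whose convergence is uniform in the starting point, hence holds at the random point $Z_0$ --- it converges $\mathbb{P}$-almost surely to $\pi$. Therefore $T^n\varsigma\to\pi\mathbb{P}$ in the weak-$\ast$ topology. Finally, uniqueness is immediate: any stationary $\mu$ satisfies $\mu=T^n\mu\to\pi\mathbb{P}$, so $\mu=\pi\mathbb{P}$.
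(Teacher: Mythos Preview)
Your overall architecture matches the paper's: reduce (i) to showing $\operatorname{diam}F^\omega_n(S)\to0$ coordinatewise, handle boundedness separately, then deduce (ii) via Letac; your treatment of (ii) is fine and is exactly what the paper does. The gap is in your collapse step. You propose that once the projection intervals are bounded, compactness upgrades the $<_J$-disjointness to a ``definite proportional gap'' removed at each $A$/$B$ block. This does not follow. When block $j$ is of type $A$, the piece discarded from $\pi_s(F^\omega_{jm}(S))$ is $\pi_s\bigl(F^\omega_{jm}(f_\beta(S))\bigr)$ for some $\beta\in B$; but $F^\omega_{jm}$ is an arbitrary continuous $J$-monotone map depending on $\omega$ and $j$, and nothing prevents it from squeezing $f_\beta(S)$ to a set of arbitrarily small diameter --- the maps are not bi-Lipschitz, and compactness of the target gives no uniform lower bound since the family $\{F^\omega_{jm}\}$ is neither fixed nor equicontinuous. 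So even with infinitely many $A$- and $B$-blocks the removed lengths may be summable, and your argument does not force $\ell_s(\omega)=0$.

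The paper sidesteps this via a dual, pointwise estimate: for each fixed $x\in\mathbb{R}$ it proves $\mathbb{P}\bigl(x\in\pi_s(F^\omega_{jm}(S))\bigr)\le\lambda^j$ with $\lambda<1$ independent of $x$, by an inductive conditioning argument (given the first $jm$ letters, one of the events ``block $j\in A$'' or ``block $j\in B$'' must exclude $x$ from the projection, so the conditional probability drops by at least $\rho=\min(\nu^m(A),\nu^m(B))$). Fubini then gives $\mathbb{E}\bigl[\eta(\pi_s(F^\omega_n(S)))\bigr]\le Cr^n$ for every finite Borel measure $\eta$ on $\mathbb{R}$; taking $\eta$ to be Lebesgue restricted to $[-\ell,\ell]$ once boundedness is established yields a.s.\ exponential diameter decay. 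This pointwise-probability trick is precisely the idea missing from your plan. (Your boundedness sketch is also somewhat off: having an $A$-block and a $B$-block occur does not by itself trap the projection, because later blocks are composed on the \emph{right} and the outer map $F^\omega_{jm}$ can stretch. The paper instead observes that the three intervals $\pi_s(f_\alpha f_\beta(S))$, $\pi_s(f_\alpha f_\alpha(S))$, $\pi_s(f_\beta f_\alpha(S))$ are pairwise disjoint, hence at least one is bounded, giving a positive-probability length-$2m$ word with bounded image --- but this is a secondary issue.)
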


The second item of Theorem \ref{pullback} says that the Markov operator is asymptotically stable. We observe that the proof  of item (ii) follows from item (i) and the Letac principle, see 
\cite{Letac}.

\subsubsection{Pullback attractor}\label{Apullbak} In this section, we take an alternative point of view of i.i.d. random iterations in order to show that the map $\pi$ of Theorem \ref{pullback} is a \emph{pullback random attractor}. We observe that this notion and several notions of random attractors have been extensively studied by Crauel et al. in \cite{CrauelHans,CrauelHans2, CrauelHans3,CrauelDebussche,CrauelDimitroff, CrauelFranco}. See also Section \ref{forwardattractor}.

Let $\Omega=E^ {\mathbb{Z}}$ be the product space endowed with the product $\sigma$-algebra 
 and the product measure $\mathbb{P}=\nu^{\mathbb{Z}}$. Consider a measurable map $f\colon E\times S\to S$ and denote by $f_{\alpha}$ the map $f_{\alpha}(x)=f(\alpha,x)$. Following Arnold \cite{Arnold}, the  map $\varphi\colon \mathbb{N}\times \Omega\times S\to S$ given by 
 \begin{equation}\label{newphi}
 \varphi(n,\omega,x)=f_{\omega_{n-1}}\circ \dots \circ f_{\omega_{0}}(x)\eqdef f_{\omega}^{n}(x)
 \end{equation}
 is called an \emph{i.i.d. random iteration of maps}.

Note that the sequence of natural projections $X_{n}(\omega)=\omega_{n}$, $n\geq 0$, is 
an i.i.d. sequence of random variables with distribution $\nu$. Hence the sequence $\omega\mapsto f_{\omega}^{n}(x)$ is also an i.i.d. random iteration in the sense of \eqref{i.i.d.}.
We say that the map $\varphi$ satisfies the $J$-splitting condition if $(f,X)$ does satisfy.

We recall that a (global point) \emph{pullback random attractor} of $\varphi$ is a $\varphi$-invariant random compact set $\omega\mapsto K(\omega)$ such that for every $x\in S$ 
$$
\lim_{n\to \infty}d(\varphi(n,\sigma^{-n}(\omega),x),K(\omega))=0, 
$$ 
for $\mathbb{P}$-almost every $\omega$,
where $\sigma$ is the shift map on $\Omega$. See for instance Crauel and Scheutzow \cite{CrauelScheutzow}.


As a consequence of Theorem \ref{pullback} we have:
\begin{corollary}\label{Cor1}
Let $S$ be a connected closed subset of $\mathbb{R}^k$ and let  $\varphi$ be an i.i.d. random iteration of $J$-monotone continuous maps on $S$ satisfying the $J$-splitting condition. Then,
there is a measurable map $\pi\colon \Omega\to S$ such that for every $x\in S$
$$
\lim_{n\to \infty}d(\varphi(n,\sigma^{-n}(\omega),x),\pi(\omega))=0
$$
for $\mathbb{P}$-almost every $\omega$. Moreover,  the distribution of $\pi$ is the unique stationary measure.  
\end{corollary}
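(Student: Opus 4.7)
The plan is to recognize that Corollary~\ref{Cor1} is essentially a reformulation of Theorem~\ref{pullback}(i) once we pass from a forward i.i.d.\ sequence $X=\{X_{n}\}_{n\geq 0}$ to the ``pulled-back'' sequence read off the negative coordinates of $\omega\in E^{\mathbb{Z}}$. The substantive work was already done in Theorem~\ref{pullback}, so what remains is an index-manipulation argument. First I would unravel $\varphi(n,\sigma^{-n}(\omega),x)$ using $(\sigma^{-n}\omega)_{j}=\omega_{j-n}$ and the definition \eqref{newphi}:
\[
\varphi(n,\sigma^{-n}(\omega),x)= f_{(\sigma^{-n}\omega)_{n-1}}\circ\cdots\circ f_{(\sigma^{-n}\omega)_{0}}(x)= f_{\omega_{-1}}\circ f_{\omega_{-2}}\circ\cdots\circ f_{\omega_{-n}}(x).
\]

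Next I would introduce the relabeled sequence $\tilde{X}_{j}(\omega):=\omega_{-j-1}$ for $j\geq 0$. Since $\mathbb{P}=\nu^{\mathbb{Z}}$, the sequence $\tilde X$ is i.i.d.\ with common distribution $\nu$, so the pair $(f,\tilde X)$ is again an i.i.d.\ random iteration of $J$-monotone continuous maps satisfying the $J$-splitting condition: indeed, both hypotheses depend only on the pair $(f,\nu)$ and are insensitive to the way the i.i.d.\ sequence is labeled. Applying Theorem~\ref{pullback}(i) to $(f,\tilde X)$ gives a measurable map $\pi\colon\Omega\to S$ such that, for $\mathbb{P}$-a.e.\ $\omega$ and every $x\in S$,
\[
\pi(\omega)=\lim_{n\to\infty} f_{\tilde X_{0}(\omega)}\circ\cdots\circ f_{\tilde X_{n}(\omega)}(x)=\lim_{n\to\infty} f_{\omega_{-1}}\circ\cdots\circ f_{\omega_{-n-1}}(x).
\]
Comparing with the unraveling above, this limit coincides with $\lim_{n\to\infty}\varphi(n+1,\sigma^{-(n+1)}(\omega),x)$, which yields $d(\varphi(n,\sigma^{-n}(\omega),x),\pi(\omega))\to 0$ for every $x\in S$. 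The distribution claim follows by applying Theorem~\ref{pullback}(ii) to $(f,\tilde X)$, which identifies $\pi\mathbb{P}$ with the unique stationary measure of the Markov operator.

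There is no genuine analytic obstacle here; the only delicate point is the bookkeeping that matches the pulled-back composition $\varphi(n,\sigma^{-n}\omega,\cdot)$ with the forward composition $f_{\tilde X_{0}}\circ\cdots\circ f_{\tilde X_{n-1}}$ appearing in Theorem~\ref{pullback}(i), together with the (routine) observation that the $J$-monotonicity and $J$-splitting hypotheses depend only on $(f,\nu)$ and are therefore preserved under relabeling the i.i.d.\ sequence.
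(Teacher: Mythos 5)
Your proof is correct and follows essentially the same approach as the paper: relabel the negative coordinates of $\omega$ as a forward i.i.d.\ sequence, observe that the hypotheses depend only on $(f,\nu)$, apply Theorem~\ref{pullback}, and match the limit with the unraveled pullback composition $\varphi(n,\sigma^{-n}\omega,x)=f_{\omega_{-1}}\circ\cdots\circ f_{\omega_{-n}}(x)$. You simply spell out the index bookkeeping a bit more explicitly than the paper does.
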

 Corollary \ref{Cor1} says that the random compact set $\omega\mapsto \{\pi(\omega)\}$ is a pullback random attractor. As an example, consider the maps $f_{1}(x)=e^x$ and $f_{2}(x)=-e^{x}$ on $\mathbb{R}$.
We take $E=\{1,2\}^{\mathbb{Z}}$ and $\mathbb{P}=\nu^{\mathbb{Z}}$, where $\nu$ is a probability measure on $\{1,2\}$ such that $\nu(\{i\})>0$ for $i=1,2$. 
Then, the i.i.d. random iteration $\varphi$ given by 
$$
\varphi(n,\omega,x)=f_{\omega_{n-1}}\circ \dots \circ f_{\omega_{0}}(x)
$$
satisfies the $J$-splitting condition. 

\subsection{Synchronization}\label{syncq}

We say that  an i.i.d. random iteration of maps $(f,X)$ satisfy the \emph{synchronization property} if for every $x$ and $y$, 
there is a measurable set $\Omega^{x,y}\subset \Omega$ with $\mathbb{P}$-full measure, such that
$$
\lim_{n\to \infty}d(Z_{n}^{x}(\omega),Z_{n}^{x}(\omega))=0
$$  
for every $\omega\in \Omega^{x,y}$. In other words, given any $x$ and $y$, with probability 
$1$, the random orbits $Z_{n}^{x}$ and  $Z_{n}^{y}$ converge to each other.
In what follows, we use  
$f_{\omega}^{n}(x)$ to denote $Z_{n}^{x}(\omega)$.


The class of i.i.d. random iterations of Lipschitz maps with negative maximal Lyapunov exponent is the most known class satisfying the synchronization property. Namely, let $(f,X)$ be an i.i.d. random iteration of Lipschitz maps and assume that there is an integrable map $c\colon \Omega\to \mathbb{R}$ such that for 
$\mathbb{P}$-almost every $\omega$ we have
$$
d(f_{\alpha}(x),f_{\alpha}(y))\leq c(\omega)d(x,y)
$$
for every $x,y\in S$. The \emph{maximal Lyapunov exponent} of the i.i.d.  random iteration $(f,X)$ is defined by 
$$
\lambda(\omega)=\lim_{n\to\infty} \frac{1}{n}\log \sup_{x,y}\frac{d(f^{n}_{\omega}(x),f^{n}_{\omega}(y))}{d(x,y)}.
$$
The above limit exits for $\mathbb{P}$-almost every $\omega$ by Kingman's theorem, see \cite{Stark}. Note that if the Lyapunov exponent is bounded (a.e.) by a negative constant, 
then it follows from the definition that there are a measurable map $C\colon \Omega\to \mathbb{R}$ and $\lambda<1$ such that 
$$
d(f_{\omega}^{n}(x),f^{n}_{\omega}(y))\leq C(\omega) \lambda^n d(x,y).
$$
In other words, negative maximal Lyapunov exponent implies exponentially fast synchronization. A practical way to verify the negativity of the Lyapunov exponent is the following estimate 
\begin{equation}\label{folk}
\lambda(\omega)\leq \int \log c(\omega)\, d\mathbb{P}(\omega)\eqdef \lambda_{0}
\end{equation}
for $\mathbb{P}$-almost every $\omega$. This seems to be a folklore result and we do not found a reference stating \eqref{folk} explicitly. We observe that this estimate holds for any \emph{stationary} random iteration of Lipschitz maps (not necessarily i.i.d. random iterations). A proof can be performed using ideas from \cite[Section 5]{Stark}.


In our next theorem, we present a result stating exponentially fast synchronization for a certain class of i.i.d. random iterations of $J$-monotone continuous maps. Under an additional boundedness condition on the maps we show that the $J$-splitting condition implies (uniform) synchronization. We observe that in our study a negative extremal Lyapunov exponent is replaced by the $J$-splitting condition.

 
\begin{assumption}\label{A2}
There are a bounded set $B$ and $m_{0}$ such that  
$$
f_{\omega}^{m_{0}}(S)\subset B 
$$
for $\mathbb{P}$-almost every $\omega$.
\end{assumption}

Clearly, Assumption \ref{A2} holds when either $S$ is bounded or the images of the maps $f_{\alpha}$ are bounded.
For an example where neither $S$ is bounded nor the images of the maps are bounded, consider the maps $f_{1},f_{2}\colon \mathbb{R}\to \mathbb{R}$ where $f_{1}(x)=e^{-x}$ and $f_{2}$ is a strictly monotone map whose image is a bounded subset $B_{0}$ of $(-\infty,0)$. Then, for every $i,j\in\{1,2\}$ the composition $f_{i}\circ f_{j}$ has image contained in $B\eqdef B_{0}\cup [0,1]$.

\begin{mtheorem} \label{sync} Let $S$ be a connected subset  of $\mathbb{R}^k$ and let  $(f,X)$ be an i.i.d. random iteration of $J$-monotone continuous maps on $S$ satisfying the $J$-splitting condition. Then, under Assumption \ref{A2}, there exist constants $r<1$ and $m_{0}\geq 1$,  and  an integrable map $c\colon \Omega\to [0,\infty)$ such that  for $\mathbb{P}$-almost every $\omega$
\[
\emph{diam}\,f_{\omega}^{n}(S)\leq c(\omega)r^n
\]
for every $n\geq m_{0}$.
\end{mtheorem}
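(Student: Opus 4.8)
The plan is to establish the estimate first for the time--reversed iteration $\hat Z_{n}(S):=f_{X_{0}}\circ\dots\circ f_{X_{n-1}}(S)$, where the $J$-splitting condition produces a genuine supermartingale--type contraction, and then to transfer the conclusion to $f_{\omega}^{n}(S)$ using that, for each fixed $n$, the random set $f_{\omega}^{n}(S)$ has the same law as $\hat Z_{n}(S)$ (because $(X_{0},\dots,X_{n-1})$ and $(X_{n-1},\dots,X_{0})$ are equidistributed), together with a Borel--Cantelli/Tonelli argument. For $w=(w_{0},\dots,w_{\ell-1})$ write $g_{w}:=f_{w_{0}}\circ\dots\circ f_{w_{\ell-1}}$, so the $J$-splitting condition reads $g_{a}(S)<_{J}g_{b}(S)$ for $a\in A$, $b\in B$. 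Two harmless reductions: replacing $m$ by $\max\{m,m_{0}\}$ and $A,B$ by $A\times E^{m'-m}$, $B\times E^{m'-m}$ (appending the new letters as the \emph{innermost} maps, so images only shrink and the ordering in Definition~\ref{Definition1} is inherited by subsets), I may assume $m\ge m_{0}$; the enlarged sets keep positive $\nu^{m}$-measure and are necessarily disjoint since $g_{a}(S)<_{J}g_{a}(S)$ is impossible. By Assumption~\ref{A2} and stationarity, for $\mathbb{P}$-a.e.\ $\omega$ one has $f_{\omega}^{n}(S)\subseteq B$ and hence $\operatorname{diam}f_{\omega}^{n}(S)\le D:=\operatorname{diam}B<\infty$ for all $n\ge m_{0}$, and likewise for $\hat Z_{n}(S)$. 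Put $D_{s}:=\operatorname{diam}\pi_{s}(B)$.

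The core step is a per--block contraction of the projection lengths of $\hat Z_{n}(S)$. For $j\ge1$ and $s\in\{1,\dots,k\}$ let $L^{(j)}_{s}:=\operatorname{diam}\pi_{s}(\hat Z_{jm}(S))\le D_{s}$ and let $\mathcal{F}_{j}$ be the $\sigma$-algebra generated by $X_{0},\dots,X_{jm-1}$. Write $P_{j}:=f_{X_{0}}\circ\dots\circ f_{X_{jm-1}}$ (a $J$-monotone, $\mathcal{F}_{j}$-measurable self-map of $S$ with $P_{j}(S)=\hat Z_{jm}(S)$, being a composition of $J$-monotone maps) and $g_{U}:=f_{X_{jm}}\circ\dots\circ f_{X_{(j+1)m-1}}$ with $U=(X_{jm},\dots,X_{(j+1)m-1})\sim\nu^{m}$ independent of $\mathcal{F}_{j}$, so that $\hat Z_{(j+1)m}(S)=P_{j}(g_{U}(S))$ and $g_{U}(S)\subseteq S$. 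On $\{U\in A\}$ one has $\operatorname{diam}\pi_{s}(\hat Z_{(j+1)m}(S))\le x:=\sup_{a\in A}\operatorname{diam}\pi_{s}(P_{j}(g_{a}(S)))$; symmetrically on $\{U\in B\}$ it is $\le y:=\sup_{b\in B}\operatorname{diam}\pi_{s}(P_{j}(g_{b}(S)))$; and on $\{U\notin A\cup B\}$ it is $\le L^{(j)}_{s}$. Moreover, for all $a\in A$, $b\in B$ the splitting condition gives $g_{a}(S)<_{J}g_{b}(S)$, so $P_{j}(g_{a}(S))$ and $P_{j}(g_{b}(S))$ are $<_{J}$-comparable ($P_{j}$ being $J$-monotone), hence by Remark~\ref{capempty} their $s$-th projections are disjoint subintervals of the length-$L^{(j)}_{s}$ interval $\pi_{s}(\hat Z_{jm}(S))$; taking suprema, $x+y\le L^{(j)}_{s}$. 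Conditioning on $\mathcal{F}_{j}$, integrating over $U$, and setting $p:=\min\{\nu^{m}(A),\nu^{m}(B)\}>0$,
\begin{equation*}
\mathbb{E}\bigl[L^{(j+1)}_{s}\mid\mathcal{F}_{j}\bigr]\ \le\ \nu^{m}(A)\,x+\nu^{m}(B)\,y+\bigl(1-\nu^{m}(A)-\nu^{m}(B)\bigr)L^{(j)}_{s}\ \le\ (1-p)\,L^{(j)}_{s}\quad\text{a.s.},
\end{equation*}
the last step being the elementary maximization of $\nu^{m}(A)x+\nu^{m}(B)y$ subject to $x+y\le L^{(j)}_{s}$. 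Iterating gives $\mathbb{E}[L^{(j)}_{s}]\le(1-p)^{j-1}D_{s}$; since $\operatorname{diam}\hat Z_{jm}(S)\le\sum_{s}L^{(j)}_{s}$ and $\operatorname{diam}\hat Z_{n}(S)$ is nonincreasing, this yields $\mathbb{E}[\operatorname{diam}\hat Z_{n}(S)]\le C_{1}\rho^{n}$ for all $n\ge m_{0}$, with $\rho:=(1-p)^{1/m}<1$ and a constant $C_{1}$.

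To finish, by the distributional identity $\mathbb{E}[\operatorname{diam}f_{\omega}^{n}(S)]=\mathbb{E}[\operatorname{diam}\hat Z_{n}(S)]\le C_{1}\rho^{n}$ for $n\ge m_{0}$. Taking $r:=\sqrt{\rho}<1$ and $c(\omega):=\sup_{n\ge m_{0}}\operatorname{diam}f_{\omega}^{n}(S)\,r^{-n}$, the bound $\operatorname{diam}f_{\omega}^{n}(S)\le c(\omega)r^{n}$ holds for every $n\ge m_{0}$ by construction, and Tonelli gives
\begin{equation*}
\mathbb{E}[c]\ \le\ \sum_{n\ge m_{0}}\mathbb{E}[\operatorname{diam}f_{\omega}^{n}(S)]\,r^{-n}\ \le\ C_{1}\sum_{n\ge m_{0}}(\rho/r)^{n}\ =\ C_{1}\sum_{n\ge m_{0}}r^{n}\ <\ \infty,
\end{equation*}
so $c$ is integrable (in particular $\mathbb{P}$-a.s.\ finite), which completes the proof.

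The main obstacle I anticipate is conceptual: the supermartingale contraction works for the reversed iteration $\hat Z_{n}(S)$ but \emph{not} directly for $f_{\omega}^{n}(S)$, since the outermost freshly composed block of $f_{\omega}^{n}(S)$ may re-expand the image to the full size of $B$ (the maps are not assumed Lipschitz); one is therefore forced to argue in expectation for $\hat Z_{n}$ and then pass to $f_{\omega}^{n}$ through exchangeability plus a first--Borel--Cantelli-type device (summing a geometric series) to upgrade the mean bound to an almost-sure bound with integrable constant. The technical crux is the inequality $x+y\le L^{(j)}_{s}$: this is exactly where $J$-monotonicity of $P_{j}$ and the disjoint-projections property of Remark~\ref{capempty} turn the purely topological $J$-splitting hypothesis into a quantitative per-block contraction, so checking that $P_{j}(g_{a}(S))$ and $P_{j}(g_{b}(S))$ are disjoint subintervals of $\pi_{s}(\hat Z_{jm}(S))$ simultaneously for all $a\in A$, $b\in B$ is where care is needed.
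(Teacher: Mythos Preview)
Your proof is correct, and its overall architecture---contract the \emph{reversed} iteration $\hat Z_{n}(S)$ using the $J$-splitting condition, transfer to $f_{\omega}^{n}(S)$ via the equidistribution of $(X_{0},\dots,X_{n-1})$ and $(X_{n-1},\dots,X_{0})$, then upgrade a geometric mean bound to an almost-sure bound with integrable constant by summing a geometric series---coincides with the paper's. The difference lies in how the geometric bound on $\mathbb{E}[\operatorname{diam}\pi_{s}(\hat Z_{n}(S))]$ is obtained.

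The paper proves a more general intermediate result (its Theorem~\ref{tcni}): for \emph{every} finite Borel measure $\eta$ on $\mathbb{R}$, $\int \eta(\pi_{s}(\hat Z_{n}(S)))\,d\mathbb{P}\le Cr^{n}$. This is done pointwise: for each $x\in\mathbb{R}$ one shows $\mathbb{P}(x\in\pi_{s}(\hat Z_{jm}(S)))\le(1-p)^{j}$ by observing that, conditionally on the first $jm$ letters, at least one of the events $\{U\in A\}$, $\{U\in B\}$ forces $x$ out of the next projection; then Fubini over $x$ with respect to $\eta$ yields the measure bound. Only at the end is $\eta$ specialised to Lebesgue on a bounded interval (using Assumption~\ref{A2}) to convert measure into diameter. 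Your argument instead works directly with the diameter: you run a genuine supermartingale estimate $\mathbb{E}[L^{(j+1)}_{s}\mid\mathcal{F}_{j}]\le(1-p)L^{(j)}_{s}$, the key inequality $x+y\le L^{(j)}_{s}$ being exactly the ``two disjoint subintervals fit inside the ambient interval'' fact that is dual to the paper's ``every point is missed by at least one of the two''. Your route is shorter and more transparent for Theorem~\ref{sync} itself, but it invokes Assumption~\ref{A2} from the outset (to make $L^{(1)}_{s}$ finite), whereas the paper's Theorem~\ref{tcni} is proved without Assumption~\ref{A2} and is reused verbatim in the proof of Theorem~\ref{pullback}, where no boundedness is assumed.

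One cosmetic remark: the quantities $x=\sup_{a\in A}\operatorname{diam}\pi_{s}(P_{j}(g_{a}(S)))$ and $y$ are suprema over a possibly uncountable index set, so their $\mathcal{F}_{j}$-measurability is not automatic. This is harmless, since your conditional-expectation inequality can be obtained without ever forming these suprema: writing $x_{u}=\operatorname{diam}\pi_{s}(P_{j}(g_{u}(S)))$, the pairwise bound $x_{u}+y_{v}\le L^{(j)}_{s}$ integrated over $A\times B$ already yields $\int_{A}x_{u}\,d\nu^{m}+\int_{B}y_{v}\,d\nu^{m}\le\max(\nu^{m}(A),\nu^{m}(B))\,L^{(j)}_{s}$, and the rest of the computation is unchanged.
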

 
Let us illustrate Theorem \ref{sync} presenting an example of i.i.d. random iterations of Lipschitz maps satisfying the synchronization property, which can not be deduced from the extremal Lyapunov exponent.

 Take an i.i.d. random iteration 
of two Lipschitz maps $f_{1}$ and $f_{2}$ with Lipschitz constants equal to $2$ and $\frac{1}{2}$, respectively. If we denote $p_{i}\eqdef\mathbb{P}(X_{0}=i)>0$, $i=1,2$, we have
$$
\lambda_{0}=p_{1}\cdot\log 2+p_{2}\cdot \log \frac{1}{2}=(p_{1}-p_{2})\cdot \log 2.
$$
Then, $\lambda_{0}$ is negative, if and only if, $p_{1}< p_{2}$. 
On the other hand, if the maps are $J$-monotone and the induced i.i.d. random iteration satisfies the $J$-splitting condition (if the images of $f_{1}$ and $f_{2}$ are disjoint for instance), then Theorem \ref{sync} applies independently of the signal of $p_{1}-p_{2}$.

\subsubsection{Exponentially fast convergence in the Wasserstein distance}
Under assumptions of Theorem \ref{sync}, we are able to improve item (ii) of Theorem \ref{pullback} by showing exponentially fast convergence in the Wasserstein distance. Recall that for any pair of probability measures $\mu_{1}$ and $\mu_{2}$ on $S$ of bounded supports, the $1$-Wasserstein distance $W_{1}$ is given by 
\[
W_{1}(\mu_{1},\mu_{2})=\sup \left|\int f\, d\mu_{1}-\int f\, d\mu_{2}\right|, 
\]
where the sup is taken over all Lipschitz maps with Lipschitz constant $1$.

\begin{corollary}\label{expofastW}
Under assumptions of Theorem \ref{sync}, we have the following:
\begin{enumerate}
\item[(i)] $\pi\mathbb{P}$ has bounded support and there is $m_{0}$ such that for every probability measure $\varsigma$ the probability measure $T^n\varsigma$ has bounded support for every $n\geq m_{0}$.
\item[(ii)]
There are $C$ and $r<1$ such that for every probability measure $\varsigma$ on $S$  
\[
W_{1}(T^n\varsigma, \pi\mathbb{P})\leq C r^{n}
\]
for every $n\geq m_{0} $ .
\end{enumerate}

\end{corollary}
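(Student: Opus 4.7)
My plan is to derive both items from a single observation: Assumption \ref{A2} forces the one-step image of \emph{any} measure under $T^{m_0}$ to concentrate on a bounded set, and Theorem \ref{sync} then gives an exponential coupling bound.

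For part (i), I would use Assumption \ref{A2} together with the stationarity of $\pi\mathbb{P}$. Since $f_X^{m_0}(x)\in B$ $\mathbb{P}$-a.s.\ for every $x\in S$, we have, for any probability measure $\nu$ on $S$,
\[
T^{m_0}\nu(\overline{B})=\int_{S}\mathbb{P}\bigl(f_X^{m_0}(x)\in \overline{B}\bigr)\,d\nu(x)=1,
\]
so $T^{m_0}\nu$ has support contained in the bounded set $\overline B$. Stationarity gives $\pi\mathbb{P}=T^{m_0}(\pi\mathbb{P})$, so $\pi\mathbb{P}$ has bounded support. For a general $\varsigma$ and $n\ge m_0$, I would write $T^n\varsigma=T^{m_0}(T^{n-m_0}\varsigma)$ and apply the same estimate to conclude that $T^n\varsigma$ is supported on $\overline{B}$.

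For part (ii), my plan is a straightforward coupling. On an extension of $(\Omega,\mathbb{P})$ I would introduce $S$-valued random variables $Z_0$ and $Y$ with respective laws $\varsigma$ and $\pi\mathbb{P}$, independent of each other and of $X$. Setting $Z_n=f_X^n(Z_0)$ and $Y_n=f_X^n(Y)$, the variable $Z_n$ has distribution $T^n\varsigma$ while, by stationarity of $\pi\mathbb{P}$, $Y_n$ has distribution $\pi\mathbb{P}$. Since $Z_0(\omega),Y(\omega)\in S$, for each $\omega$ both $Z_n(\omega)$ and $Y_n(\omega)$ lie in $f_\omega^n(S)$, and Theorem \ref{sync} yields an integrable $c$ and $r<1$ with
\[
d\bigl(Z_n(\omega),Y_n(\omega)\bigr)\le \mathrm{diam}\,f_\omega^n(S)\le c(\omega)\,r^n,\qquad n\ge m_0.
\]
For any $1$-Lipschitz $g$ on $\overline B$, this gives
\[
\Bigl|\int g\,d(T^n\varsigma)-\int g\,d(\pi\mathbb{P})\Bigr|=\bigl|\mathbb{E}[g(Z_n)-g(Y_n)]\bigr|\le \mathbb{E}[d(Z_n,Y_n)]\le C r^n
\]
with $C=\mathbb{E}[c]<\infty$. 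Taking the supremum over such $g$ delivers $W_1(T^n\varsigma,\pi\mathbb{P})\le Cr^n$.

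There is no substantive obstacle: the only subtleties are making sure $W_1$ is defined (which is precisely the role of (i)) and realizing $Z_0$, $Y$ and $X$ jointly as independent on a common space, which is routine. The mild point worth flagging is that the bound $d(Z_n,Y_n)\le \mathrm{diam}\,f_\omega^n(S)$ is a pointwise, $\omega$-wise estimate, so one really needs the $\mathbb{P}$-almost sure version of Theorem \ref{sync} rather than just convergence in probability — but this is exactly what Theorem \ref{sync} provides.
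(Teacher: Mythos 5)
Your proof is correct and follows essentially the same strategy as the paper: for (i) you use Assumption \ref{A2} to show $T^{m_0}\nu$ is supported in $\overline{B}$ for any $\nu$, and for (ii) you bound the Wasserstein distance by coupling $T^n\varsigma$ and $\pi\mathbb{P}$ via the same forward iterates $X_0,\dots,X_{n-1}$ and invoking the a.s. diameter bound from Theorem \ref{sync}. The only cosmetic difference is that the paper realizes the coupling explicitly by introducing a two-sided i.i.d. sequence $\{X_n\}_{n\in\mathbb{Z}}$ and setting $Y=\pi_0(\omega)=\lim_k f_{X_{-1}}\circ\cdots\circ f_{X_{-k}}(p)$ (which is automatically independent of the forward $X_0,X_1,\dots$ and distributed as $\pi\mathbb{P}$), whereas you introduce an abstract independent $Y\sim\pi\mathbb{P}$ on an extension of the probability space; both yield the same estimate.
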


An exponentially fast convergence is also obtained in \cite{Bhbook,BhLe} for the $n$-step transition probability of an i.i.d. random iteration of monotone maps (w.r.t. the weak componentwise order) satisfying their splitting condition. The authors consider a Kolmogorov type distance, and Assumption \ref{A2} is not required. 
\subsubsection{Forward random attractor}\label{forwardattractor}
We now return to the setting of Section \ref{Apullbak}. We show that the map $\pi$ of Corollary \ref{A1} is also a forward random attractor. Given an i.i.d. random iteration $\varphi$,  we recall that a \emph{forward random attractor} of $\varphi$ is a $\varphi$-invariant random compact set $\omega\mapsto K(\omega)$ such that for every $x\in S$,
$$
\lim_{n\to \infty}d(\varphi(n,\omega,x),K(\sigma^{n}(\omega))=0, 
$$  
 for $\mathbb{P}$-almost every $\omega$,
where $\sigma$ is the shift map on $\Omega$. See for instance Crauel and Scheutzow \cite{CrauelScheutzow}.
As a consequence of Theorem \ref{sync}, we have the following: 
\begin{corollary}\label{Cor2}
Let $\varphi$ be an i.i.d. random iteration of $J$-monotone maps satisfying the $J$-splitting condition. Let $\pi\colon \Omega\to S$ be the map as in Corollary \ref{Cor1}.
 Then,  
under Assumption \ref{A2}, we have that for every $x\in S$
$$
\lim_{n\to \infty}d(\varphi(n,\omega,x),\pi(\sigma^{n}(\omega))=0
$$  
for $\mathbb{P}$-almost every $\omega$.
\end{corollary}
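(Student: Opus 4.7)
The plan is to reduce Corollary~\ref{Cor2} to the diameter bound of Theorem~\ref{sync}. The essential observation is that the pullback limit $\pi$ supplied by Corollary~\ref{Cor1} is a strongly invariant random fixed point of the cocycle $\varphi$, namely
$$
\varphi(n,\omega,\pi(\omega))=\pi(\sigma^{n}\omega)
$$
for $\mathbb{P}$-a.e.\ $\omega$ and every $n\ge 0$. Granting this identity, the corollary is almost immediate: for any fixed $x\in S$ both $\varphi(n,\omega,x)$ and $\pi(\sigma^{n}\omega)=\varphi(n,\omega,\pi(\omega))$ belong to the random set $f^{n}_{\omega}(S)$, and Theorem~\ref{sync} therefore yields
$$
d\bigl(\varphi(n,\omega,x),\pi(\sigma^{n}\omega)\bigr)\le \operatorname{diam} f^{n}_{\omega}(S)\le c(\omega)\,r^{n}
$$
for every $n\ge m_0$, which tends to $0$ almost surely. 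As a bonus one gets the quantitative rate with the same integrable $c$ and the same $r<1$ provided by Theorem~\ref{sync}.

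To prove the invariance identity I would simply unwind the pullback formula. In the product setting of Section~\ref{Apullbak}, Corollary~\ref{Cor1} says that for any fixed $x_0\in S$,
$$
\pi(\omega)=\lim_{k\to\infty}f_{\omega_{-1}}\circ\cdots\circ f_{\omega_{-k}}(x_0).
$$
Applying this at $\sigma^n\omega$ gives $\pi(\sigma^n\omega)=\lim_{k}f_{\omega_{n-1}}\circ\cdots\circ f_{\omega_{n-k}}(x_0)$. Splitting the composition at index $0$ for $k\ge n$,
$$
f_{\omega_{n-1}}\circ\cdots\circ f_{\omega_{n-k}}(x_0)=\bigl(f_{\omega_{n-1}}\circ\cdots\circ f_{\omega_0}\bigr)\bigl(f_{\omega_{-1}}\circ\cdots\circ f_{\omega_{n-k}}(x_0)\bigr),
$$
and the inner bracket, after the substitution $m=k-n$, is precisely $f_{\omega_{-1}}\circ\cdots\circ f_{\omega_{-m}}(x_0)$, which converges to $\pi(\omega)$ as $m\to\infty$. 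Continuity of the finitely many maps $f_{\omega_0},\dots,f_{\omega_{n-1}}$ lets me pass the limit to the outside, giving $\pi(\sigma^n\omega)=\varphi(n,\omega,\pi(\omega))$.

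The main obstacle is not conceptual but bookkeeping: one must keep track of the bi-infinite indexing $\omega=(\omega_k)_{k\in\mathbb{Z}}$, remember that $\pi(\omega)$ depends only on the negative coordinates of $\omega$ and hence lies in $S$ (using that $S$ is closed, as in Corollary~\ref{Cor1}), and make sure the invariance identity holds on a $\sigma$-invariant set of full $\mathbb{P}$-measure. Once this is settled, no new analytic input beyond Theorem~\ref{sync} is required and Corollary~\ref{Cor2} follows as a direct dividend of the shrinking of $f^n_\omega(S)$ already established there.
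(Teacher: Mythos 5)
Your argument is correct and matches the paper's proof: both establish the invariance identity $\varphi(n,\omega,\pi(\omega))=\pi(\sigma^n\omega)$ (the paper states $f_\omega(\pi(\omega))=\pi(\sigma\omega)$ and iterates; you unwind the pullback limit directly and pass to the limit by continuity) and then apply the diameter bound of Theorem~\ref{sync} to conclude. Your version spells out the derivation of the invariance a bit more explicitly, but it is the same approach.
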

It follows from Corollaries \ref{Cor1} and \ref{Cor2} that $\omega\mapsto \{\pi(\omega)\}$ is a pullback and a forward random attractor.

\subsection{Functional central limit theorem}\label{FCLT}
We now present a functional central limit theorem for the class of i.i.d. random iterations considered in Theorem \ref{sync}.

In the study of central limit theorems 
for homogeneous Markov chains, there are several results that reduce the problem  to the verification of some analytical condition on the associated transfer operator, see for instance \cite{BataCLT,Gordin, Wood}. In our paper, we obtain a  functional central limit theorem by solving the Poisson equation induced by the transition probability defined in \eqref{tp}. Indeed, a solution of the Poisson equation allows us to reduce the problem the martigale case. This technique came up with  Gordin and Lifsic in \cite{Gordin}.

To be more precise, we first recall the definitions of transfer operator and Poisson equation. Let $p$ be a transition probability on a measurable space. The \emph{transfer operator} $P$ induced by the transition probability $p$ is defined as follows: given a non-negative measurable map $f$, the action of $P$ in $f$ is a non-negative measurable map $Pf$ given by 
\begin{equation} \label{transferoperator}
Pf(x)=\int f(y)\,p(x,dy).
\end{equation}
For a measurable map $f$ not necessarily non-negative, we write $f=f^{+}-f^{-}$ as a difference of non-negative measurable maps and we define 
$$
Pf(x)=Pf^{+}(x)-Pf^{-}(x),
$$
 if $Pf^{+}(x)$ and $Pf^{-}(x)$ are both finite. 
 
Let $\mu$ be a stationary measure. Given a non-constant (a.e.) map $\phi\colon S\to \mathbb{R}$ with $\phi\in L^{2}(\mu)$ and $\int \phi\, d\mu=0$, the equation
$$
(I-P)\psi=\phi
$$ 
 is called the \emph{Poisson equation}. 

Let  $Z_{n}$ be an ergodic stationary Markov chain with transition probability $p$ and stationary measure $\mu$. The existence of a solution $\psi\in L^{2}(\mu)$ of the Poisson equation implies that a FCLT holds for the Markov chain $Z_{n}$ taking $\phi$ as observable, i.e., the process $Y_{n}$ given by 
\[
Y_{n}(t)=\displaystyle\frac{1}{\sigma\sqrt{n}}\sum_{j=0}^{[nt]}\phi(Z_{j}), \qquad 0\leq t<\infty
\]
  converges in distribution (weak-star convergence) to the Wiener measure in $D[0,\infty)$, where $D[0,\infty)$ is the space of real-valued right continuous function on $[0,\infty)$ having left limits endowed with the Skorohod topology and $\sigma\eqdef \int \psi^2\,\mu-\int (P\psi)^2\, d\mu>0$.
  
 The FCLT stated above is proved by reducing the problem to the martigale case.
 See \cite[Page 1340]{BhLe} for this reduction and Billingsley \cite[Theorem 18.3]{Patrick} for a FCLT for martigale differences.

\begin{mtheorem}\label{functionalclt}
Let $S$ be a connected closed subset of $\mathbb{R}^k$ and let  $(f,X)$ be an i.i.d. random iteration of  $J$-monotone continuous maps on $S$ satisfying the $J$-splitting condition.
Let $\mu$ be the unique stationary measure and consider a non-constant (a.e.) Lipschitz map  $\phi\colon S\to \mathbb{R}$  in $L^{2}(\mu)$ with $\int \phi\, d\mu=0$. Then, under Assumption \ref{A2} 
\begin{enumerate}
\item[(i)] there is $\psi\in L^{2}(\mu)$ such that $(I-P)\psi=\phi$.

\item [(ii)] If  $Z_{n}$ is a stationary Markov chain associated with $(f,X)$, then  the process $Y_{n}$ given by 
\[
Y_{n}(t)=\displaystyle\frac{1}{\sigma\sqrt{n}}\sum_{j=0}^{[nt]}\phi(Z_{j}), \qquad 0\leq t<\infty, 
\]
converges in distribution to the Wiener measure in $D[0,\infty)$, where  $\sigma=\int (\psi-P\psi)^2\, d\mu$.
\end{enumerate} 
\end{mtheorem}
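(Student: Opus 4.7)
The plan is to reduce part (ii) to a martingale functional CLT via the Gordin--Lifsic method, so the essential work is part (i): solving the Poisson equation in $L^{2}(\mu)$. I build $\psi$ as the formal Neumann series $\psi \eqdef \sum_{n\geq 0}P^{n}\phi$ and prove its convergence by combining the Lipschitz regularity of $\phi$ with the exponential diameter decay supplied by Theorem \ref{sync}.

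For (i), Theorem \ref{sync} furnishes $r<1$, $m_{0}\geq 1$, and integrable $c\colon \Omega\to[0,\infty)$ with $\mathrm{diam}\, f_{\omega}^{n}(S)\leq c(\omega)r^{n}$ for all $n\geq m_{0}$. Since $\phi$ is $L$-Lipschitz, for every $x,y\in S$ and $n\geq m_{0}$,
\begin{equation*}
|\phi(f_{\omega}^{n}(x))-\phi(f_{\omega}^{n}(y))|\leq L\, c(\omega)\, r^{n}.
\end{equation*}
Taking expectations in $\omega$ and recalling $P^{n}\phi(x)=\mathbb{E}[\phi(f_{\omega}^{n}(x))]$ gives $|P^{n}\phi(x)-P^{n}\phi(y)|\leq L\,\mathbb{E}[c]\, r^{n}$ uniformly in $x,y\in S$. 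Stationarity yields $\int P^{n}\phi\, d\mu=\int \phi\, d\mu=0$, so integrating the preceding estimate in $y$ against $\mu$ produces the pointwise bound $|P^{n}\phi(x)|\leq L\,\mathbb{E}[c]\, r^{n}$ for every $x\in S$ and $n\geq m_{0}$. For the finitely many initial terms $n<m_{0}$, the $L^{2}(\mu)$-contractivity of $P$ (a consequence of stationarity) gives $\|P^{n}\phi\|_{L^{2}(\mu)}\leq \|\phi\|_{L^{2}(\mu)}$. Therefore $\sum_{n\geq 0}\|P^{n}\phi\|_{L^{2}(\mu)}<\infty$, so $\psi\in L^{2}(\mu)$ is well defined and a direct telescoping check shows $(I-P)\psi=\phi$.

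For (ii), let $\mathcal{F}_{j}=\sigma(Z_{0},\dots,Z_{j})$ and set $D_{j}\eqdef \psi(Z_{j})-P\psi(Z_{j-1})$ for $j\geq 1$. By the Markov property $\mathbb{E}[D_{j}\mid \mathcal{F}_{j-1}]=0$, so $\{D_{j}\}$ is a stationary, square-integrable, ergodic sequence of martingale differences (ergodicity coming from the uniqueness of the stationary measure established in Theorem \ref{pullback}), with variance $\sigma^{2}=\int \psi^{2}\, d\mu-\int (P\psi)^{2}\, d\mu$; this is strictly positive because $\phi$ being non-constant a.e.\ with $\int\phi\, d\mu=0$ forces $\psi\neq P\psi$ on a set of positive $\mu$-measure. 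Rewriting $\phi=\psi-P\psi$ as $\phi(Z_{j-1})=D_{j}+\psi(Z_{j-1})-\psi(Z_{j})$ and summing telescopes to
\begin{equation*}
\sum_{j=0}^{[nt]}\phi(Z_{j})=\sum_{j=1}^{[nt]+1}D_{j}+\psi(Z_{0})-\psi(Z_{[nt]+1}),
\end{equation*}
and since $\psi\in L^{2}(\mu)$ with $Z_{j}\sim\mu$, a standard application of Markov's inequality and the union bound shows $\max_{0\leq j\leq nT}|\psi(Z_{j})|/\sqrt{n}\to 0$ in probability for every $T>0$, so the remainder is negligible in the Skorohod topology. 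Billingsley's FCLT for stationary ergodic martingale differences \cite[Theorem 18.3]{Patrick} applied to $\{D_{j}\}$ then yields the asserted convergence of $Y_{n}$ to Wiener measure in $D[0,\infty)$.

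The single delicate step in the whole argument is the passage from the pathwise diameter bound of Theorem \ref{sync} to the pointwise exponential decay of $P^{n}\phi$: this is where the Lipschitz hypothesis on $\phi$, the integrability of $c(\omega)$, and the centering $\int\phi\, d\mu=0$ are all simultaneously exploited, and it is precisely this estimate that replaces the negative Lyapunov exponent assumption used in the usual contractive setup. Once it is in hand, everything afterwards is the standard Gordin--Lifsic reduction combined with Billingsley's martingale FCLT.
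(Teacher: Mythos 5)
Your proposal is correct and follows essentially the same route as the paper: both solve the Poisson equation by showing $\|P^{n}\phi\|_{L^{2}(\mu)}$ decays geometrically using the Lipschitz property of $\phi$, the centering $\int\phi\,d\mu=0$, and the exponential diameter bound from Theorem~\ref{sync}, then invoke the Gordin--Lifsic/Billingsley martingale reduction for (ii). The only notable discrepancies are cosmetic: the paper writes $\psi=-\sum_{n\ge 0}P^{n}\phi$ (a sign slip, since $(I-P)\sum P^{n}\phi=\phi$, so your unsigned series is the correct one), and you spell out the martingale-difference decomposition and tightness of the boundary term, which the paper leaves to a citation.
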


\begin{remark}\emph{
It follows from Theorem \ref{sync} that the FCLT in Theorem \ref{functionalclt} holds under every initial distribution.}
\end{remark}

Theorem \ref{functionalclt} extends \cite[Theorem 3.1]{BhLe} and \cite[Theorem 4.2, Page 362]{Bhbook}.
Therein, for i.i.d. random iterations of non-decreasing maps on a closed subset $S\subset \mathbb{R}^{k}$ satisfying the splitting condition, the authors show that the Poisson equation $(I-P)\psi=\phi$ has a solution provided that  $\phi$  may be expressed as a difference of two non-decreasing bounded functions. Furthermore, they also obtain  item (ii) under every initial distribution. However, their proof works only for i.i.d. random iterations of non-decreasing maps. It worth mentioning that our dynamical systems approach allows us to prove the FCLT for i.i.d. random iterations of $J$-monotone maps (increasing and decreasing).

\section{Proof of Theorem \ref{pullback}} \label{p1}

 We start with a preliminary result. Let $\pi_{s}\colon \mathbb{R}^k\to \mathbb{R}$ be the natural projection $\pi_{s}(x)=x_{s}$, $s=1,\dots, k$.

\begin{theorem}\label{tcni}
Let $S$ be a connected closed subset of $\mathbb{R}^k$ and let  $(f,X)$ be an i.i.d. random iteration of $J$-monotone continuous maps on $S$ satisfying the $J$-splitting condition.
Then, there exists $0<r <1$  such that for every  finite Borel measure $\eta$ on $\mathbb{R}$ there is $C\geq 0$ such that
	\[
	\int \eta( \pi_{s}(f^{n}_{\omega}(S)))\, d\mathbb{P}(\omega)=\int \eta(\pi_{s}(f_{X_0}\circ \cdots \circ  f_{X_{n-1}}(S)))	\, d\mathbb{P}\leq r^n C
	\]
	for every $n\geq 0$ and every $s=1,\dots, k$.
\end{theorem}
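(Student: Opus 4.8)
The two integrals in the statement coincide because reversing the i.i.d.\ block $(X_{0},\dots,X_{n-1})$ does not change its law, so it suffices to estimate the second one. Write $T_{n}:=f_{X_{0}}\circ\cdots\circ f_{X_{n-1}}$ (so $T_{0}=\mathrm{id}$) and, for $x\in\mathbb{R}$ and $s\in\{1,\dots,k\}$, set $p_{n}^{x,s}:=\mathbb{P}\bigl(x\in\pi_{s}(T_{n}(S))\bigr)$. The plan is to prove the uniform bound $p_{n}^{x,s}\le\rho^{-1}r^{n}$ for a universal $r<1$, and then integrate against $\eta$. Two preliminary remarks: a composition of $J$-monotone maps is again $J$-monotone (easily checked from the definitions), so every $T_{n}(\omega)$ is $J$-monotone; and since $f_{X_{n}}(S)\subset S$ we have $T_{n+1}(S)=T_{n}(f_{X_{n}}(S))\subset T_{n}(S)$, hence the intervals $\pi_{s}(T_{n}(\omega)(S))$ are nested decreasing in $n$ and $n\mapsto p_{n}^{x,s}$ is non-increasing. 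Let $m$, $A$, $B$ be as in the $J$-splitting condition, put $\delta:=\min\{\nu^{m}(A),\nu^{m}(B)\}>0$ and $\rho:=1-\delta\in(0,1)$. Finally, by a routine measurability argument ($S$ is $\sigma$-compact, the maps continuous, so $\pi_{s}(T_{n}(\omega)(S))$ is a $\sigma$-compact interval and $(y,\omega)\mapsto\mathds{1}[\,y\in\pi_{s}(T_{n}(\omega)(S))\,]$ is jointly measurable) Tonelli's theorem gives $\int\eta(\pi_{s}(T_{n}(S)))\,d\mathbb{P}=\int_{\mathbb{R}}p_{n}^{y,s}\,d\eta(y)$, both $\eta$ and $\mathbb{P}$ being finite.

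The key step is the following one-block estimate, valid for every $q\ge 0$:
\[
\mathbb{P}\bigl(x\in\pi_{s}(T_{(q+1)m}(S))\,\big|\,\mathcal{F}_{qm}\bigr)\ \le\ \rho\cdot\mathds{1}_{\{x\in\pi_{s}(T_{qm}(S))\}},\qquad \mathcal{F}_{qm}:=\sigma(X_{0},\dots,X_{qm-1}).
\]
To prove it, factor $T_{(q+1)m}=T_{qm}\circ U_{q}$ with $U_{q}:=f_{X_{qm}}\circ\cdots\circ f_{X_{(q+1)m-1}}$: then $T_{qm}$ is $\mathcal{F}_{qm}$-measurable, $U_{q}$ is independent of $\mathcal{F}_{qm}$, and the word $w:=(X_{qm},\dots,X_{(q+1)m-1})$ defining $U_{q}$ has law $\nu^{m}$. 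Conditioning on $\mathcal{F}_{qm}$ freezes the $J$-monotone map $\tau:=T_{qm}$ while leaving $U_{q}(S)=f_{w}(S)$ random, so the conditional probability equals $\nu^{m}(W)$, where $W:=\{w\in E^{m}:x\in\pi_{s}(\tau(f_{w}(S)))\}$. Now $W$ cannot meet both $A$ and $B$: if $w_{A}\in A\cap W$ and $w_{B}\in B\cap W$, then $f_{w_{A}}(S)<_{J}f_{w_{B}}(S)$ by the $J$-splitting condition, so by Remark \ref{capempty} the sets $\tau(f_{w_{A}}(S))$ and $\tau(f_{w_{B}}(S))$ have disjoint $s$-projections, contradicting $x\in\pi_{s}(\tau(f_{w_{A}}(S)))\cap\pi_{s}(\tau(f_{w_{B}}(S)))$. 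Hence $\nu^{m}(W)\le\max\{1-\nu^{m}(A),\,1-\nu^{m}(B)\}=\rho$. Moreover, on $\{x\notin\pi_{s}(T_{qm}(S))\}$ we have $\pi_{s}(\tau(f_{w}(S)))\subset\pi_{s}(\tau(S))\not\ni x$ for every $w$, so $W=\varnothing$ and the conditional probability is $0$; the displayed inequality follows.

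Taking expectations yields $p_{(q+1)m}^{x,s}\le\rho\,p_{qm}^{x,s}$, hence $p_{qm}^{x,s}\le\rho^{q}p_{0}^{x,s}\le\rho^{q}$ for all $q\ge 0$. For arbitrary $n$, pick $q=\lfloor n/m\rfloor$; using that $n\mapsto p_{n}^{x,s}$ is non-increasing, $p_{n}^{x,s}\le p_{qm}^{x,s}\le\rho^{q}\le\rho^{\,n/m-1}=\rho^{-1}r^{n}$ with $r:=\rho^{1/m}\in(0,1)$, uniformly in $x$ and $s$. Integrating against $\eta$ as above gives $\int\eta(\pi_{s}(T_{n}(S)))\,d\mathbb{P}\le\rho^{-1}\eta(\mathbb{R})\,r^{n}$, so the theorem holds with $C:=\rho^{-1}\eta(\mathbb{R})<\infty$. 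The main obstacle is the combinatorial claim inside the one-block estimate — recognizing that the $J$-splitting condition, via the disjoint-projection property of Remark \ref{capempty}, forbids the ``bad-word'' set $W$ from meeting both $A$ and $B$; once that is in hand, the rest is a routine martingale-type iteration, the only other slightly delicate point being the joint measurability required for Tonelli's theorem.
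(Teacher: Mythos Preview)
Your proof is correct and follows essentially the same approach as the paper's. Both arguments define the ``hitting'' probabilities $p_{n}^{x,s}=\mathbb{P}(x\in\pi_{s}(T_{n}(S)))$, prove a geometric decay $p_{jm}^{x,s}\le\rho^{j}$ by the same combinatorial observation (the $J$-splitting condition together with Remark~\ref{capempty} forces the fresh $m$-block to miss either $A$ or $B$ once the prefix $T_{qm}$ is frozen), extend to all $n$ by monotonicity, and finish via Fubini/Tonelli against $\eta$; your conditional-expectation formulation $\mathbb{P}(\cdot\mid\mathcal{F}_{qm})\le\rho\,\mathds{1}_{\{\cdot\}}$ is a slightly cleaner packaging of the paper's regular-conditional-probability computation, but the underlying mechanism is identical.
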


This theorem is an important step of the proof of Theorem 1 and its proof is
inspired by the ideas in \cite{Yuri}.

\begin{proof}[Proof of Theorem \ref{tcni}]
For every $s\in\{1,\dots, k\}$, $x\in S$ and $n\geq 1$, 
define 
\[
\Sigma_{n}^x(s)=\{\omega\in \Omega\colon x\in \pi_{s}(f_{X_{0}(\omega)}\circ \cdots \circ  f_{X_{n-1}(\omega)}(S))\}.	
\] 
Note that for every $n\geq 1$, we have the following useful property 
$$
\Sigma_{n+1}^x(s)\subset \Sigma_{n}^x(s)
$$
for every $s=1,\dots, k$.

\begin{lemma}\label{weaki}
There are $\lambda<1$ and an integer $m\geq 1$  such that 
\[
\mathbb{P}(\Sigma_{jm}^x(s))\leq \lambda^j
\]
for every $j\geq 1$ and $s=1,\dots, k$.
\end{lemma}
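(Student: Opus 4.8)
The plan is to prove a single contraction estimate across one block of length $m$ and then iterate. Put $a\eqdef\nu^m(A)>0$, $b\eqdef\nu^m(B)>0$ and $\lambda\eqdef 1-\min\{a,b\}$; the $J$-splitting condition guarantees $a,b>0$, so $\lambda<1$. I will show that for every $j\ge 0$ (with the convention that the empty composition $\Phi_0$ is the identity, and $\Sigma_0^x(s)\eqdef\{\omega\colon x\in\pi_s(S)\}$),
\[
\mathbb P\big(\Sigma_{(j+1)m}^x(s)\big)\ \le\ \lambda\,\mathbb P\big(\Sigma_{jm}^x(s)\big),
\]
and since $\mathbb P(\Sigma_0^x(s))\le 1$ the lemma follows by induction on $j$.

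Two structural ingredients come first. A composition of $J$-monotone maps is again $J$-monotone, so $\Phi_n\eqdef f_{X_0}\circ\cdots\circ f_{X_{n-1}}$ is $J$-monotone for every $n$. Next, writing $F_{(\alpha_0,\dots,\alpha_{m-1})}\eqdef f_{\alpha_0}\circ\cdots\circ f_{\alpha_{m-1}}$, set
\[
\bar A\eqdef\bigcup_{\alpha\in A}F_\alpha(S),\qquad \bar B\eqdef\bigcup_{\beta\in B}F_\beta(S),
\]
which are subsets of $S$. The $J$-splitting condition says $F_\alpha(S)<_J F_\beta(S)$ for all $\alpha\in A$, $\beta\in B$, hence $\bar A<_J\bar B$ (and $A\cap B=\emptyset$, as $<_J$ is irreflexive). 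By Remark \ref{capempty} applied to the $J$-monotone map $\Phi_{jm}(\omega)$, for every $\omega$ the projections $\pi_s(\Phi_{jm}(\omega)(\bar A))$ and $\pi_s(\Phi_{jm}(\omega)(\bar B))$ are disjoint. One should first check that $\bar A,\bar B$ and the events built from them below are universally measurable; this is routine, since $\bar A,\bar B$ are measurably parametrized unions of the $\sigma$-compact sets $F_\alpha(S)$, hence analytic.

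Now fix $j\ge 0$ and factor $f_{X_0}\circ\cdots\circ f_{X_{(j+1)m-1}}=\Phi_{jm}\circ g$ with $g\eqdef f_{X_{jm}}\circ\cdots\circ f_{X_{(j+1)m-1}}$. Since $X$ is i.i.d., $g$ is independent of $\Phi_{jm}$, and $\mathbb P((X_{jm},\dots,X_{(j+1)m-1})\in A)=a$, and likewise $b$ for $B$. On the event $\{(X_{jm},\dots,X_{(j+1)m-1})\in A\}$ one has $g(S)\subseteq\bar A$, hence $f_{X_0}\circ\cdots\circ f_{X_{(j+1)m-1}}(S)=\Phi_{jm}(g(S))\subseteq\Phi_{jm}(\bar A)$, so
\[
\Sigma_{(j+1)m}^x(s)\cap\{(X_{jm},\dots,X_{(j+1)m-1})\in A\}\ \subseteq\ E_A\eqdef\{\omega\colon x\in\pi_s(\Phi_{jm}(\omega)(\bar A))\},
\]
and symmetrically for $B$ and $E_B\eqdef\{\omega\colon x\in\pi_s(\Phi_{jm}(\omega)(\bar B))\}$; on the complementary event of probability $1-a-b$ we simply use the nesting $\Sigma_{(j+1)m}^x(s)\subseteq\Sigma_{jm}^x(s)$. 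Moreover $E_A,E_B\subseteq\Sigma_{jm}^x(s)$ (because $\bar A,\bar B\subseteq S$) and $E_A\cap E_B=\emptyset$ (by the disjointness above), so with $p_A\eqdef\mathbb P(E_A)$, $p_B\eqdef\mathbb P(E_B)$ we obtain $p_A+p_B\le\mathbb P(\Sigma_{jm}^x(s))$. As $E_A,E_B,\Sigma_{jm}^x(s)$ are $\sigma(X_0,\dots,X_{jm-1})$-measurable and independent of the last block,
\[
\mathbb P\big(\Sigma_{(j+1)m}^x(s)\big)\ \le\ a\,p_A+b\,p_B+(1-a-b)\,\mathbb P\big(\Sigma_{jm}^x(s)\big)\ \le\ \big(\max\{a,b\}+1-a-b\big)\,\mathbb P\big(\Sigma_{jm}^x(s)\big),
\]
using $a\,p_A+b\,p_B\le\max\{a,b\}(p_A+p_B)\le\max\{a,b\}\,\mathbb P(\Sigma_{jm}^x(s))$; since $\max\{a,b\}+1-a-b=1-\min\{a,b\}=\lambda$, this is the claimed estimate.

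I do not expect a serious obstacle: the proof is essentially the block decomposition, the nesting $\Sigma_{n+1}^x(s)\subseteq\Sigma_n^x(s)$, and the disjoint-projection property of $<_J$ from Remark \ref{capempty}. The one genuinely load-bearing point — and the reason a naive argument comparing the orbit with a single $B$-iterate fails — is that one must compare $f_{X_0}\circ\cdots\circ f_{X_{(j+1)m-1}}(S)$ with $\Phi_{jm}$ applied to the \emph{whole} unions $\bar A,\bar B$: only then are the two survival events $E_A,E_B$ disjoint, which is precisely what yields $p_A+p_B\le\mathbb P(\Sigma_{jm}^x(s))$ and hence the contraction. The only real care needed is the (routine) measurability of $\bar A,\bar B$ and of $E_A,E_B$.
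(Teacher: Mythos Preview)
Your proof is correct and rests on the same core observation as the paper's: for any realization of the first $jm$ steps, the point $x$ cannot lie simultaneously in the $s$-projections of $\Phi_{jm}(\bar A)$ and $\Phi_{jm}(\bar B)$, by $J$-monotonicity of $\Phi_{jm}$ together with Remark~\ref{capempty}. The packaging differs, however. The paper conditions on $Z=(X_0,\dots,X_{jm-1})$ and, for each history $z$, selects one of the cylinder events $A_j,B_j$ to exclude from $\Sigma_{(j+1)m}^x(s)\cap[Z=z]$, then integrates via regular conditional probability. You instead introduce the global disjoint events $E_A,E_B\subseteq\Sigma_{jm}^x(s)$ (which are $\sigma(X_0,\dots,X_{jm-1})$-measurable) and decompose according to whether the last block lands in $A$, in $B$, or in neither, using independence directly. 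Your route is arguably cleaner --- it avoids the conditional-probability machinery and the slight awkwardness that in the paper's argument the excluded set $\Gamma_j$ actually varies with the history $z$ --- at the price of the mild measurability check for the analytic sets $\bar A,\bar B$ and the events $E_A,E_B$, which the paper's cylinder-by-cylinder formulation sidesteps. Both arguments deliver the same contraction constant $\lambda=1-\min\{\nu^m(A),\nu^m(B)\}$.
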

\begin{proof}
Fix $s\in \{1,\dots, k\}$.
The proof is by induction on $j$. Let $m$,
$A$ and $B$ as in the definition of the $J$-splitting condition, recall Definition \ref{Definition1}.  For $j=1$, it follows from the $J$-splitting condition and Remark \ref{capempty} that
\[
\Sigma_{m}^x(s)\subset \Omega-\Gamma_{1}, 
\]
where either
\[ 
\Gamma_{1}=\{\omega\in \Omega\colon (X_{0}(\omega), \dots,X_{m-1}(\omega))\in A\}\eqdef A_{1}
\]
 or
 \[ 
\Gamma_{1}=\{\omega\in \Omega\colon (X_{0}(\omega), \dots,X_{m-1}(\omega))\in B\}\eqdef B_{1}.
\] 
In particular, 
\[
\mathbb{P}(\Sigma_{m}^x(s))\leq 1-\rho\eqdef\lambda, 
\]
where $\rho=\min\{\mathbb{P}(A_{1}),\mathbb{P}(B_{1})\}>0$.

Assume that the lemma holds for $j\geq 1$. We now prove the lemma for $j+1$. Consider the random variable $Z=(X_{0},\dots, X_{jm-1})$ taking values in $E^{jm}$. For every $z\in E^{jm}$, we claim that
\begin{equation}\label{funddesi}
\Sigma_{(j+1)m}^x(s)\cap [Z=z]\subset \Sigma_{jm}^x(s)-[Z=z]\cap\Gamma_{j}\cap \Sigma_{jm}^x(s)
\end{equation}
where  either
$$
\Gamma_{j}=\{\omega \in \Omega\colon (X_{jm}(\omega),\dots, X_{(j+1)m-1}(\omega))\in A\}\eqdef A_{j}
$$
or 
$$
\Gamma_{j}=\{\omega\in \Omega\colon (X_{jm}(\omega),\dots, X_{(j+1)m-1}(\omega))\in B\}\eqdef B_{j}
$$
Indeed, if \eqref{funddesi} does not hold, then there are
$\omega, \hat\omega\in \Omega$ such that 
\[
\omega \in \Sigma_{(j+1)m}^x(s) \cap [Z=z]\cap A_{j} \quad \mbox{and} \quad \hat \omega \in \Sigma_{(j+1)m}^x(s) \cap [Z=z]\cap B_{j}.
\]
If we write $z=(\alpha_{0},\dots, \alpha_{jm-1})$, we have that 
\[
x\in \pi_{s}(f_{\alpha_{0}}\circ \dots\circ  f_{\alpha_{jm-1}}\circ f_{X_{mj}(\omega)}\circ \dots \circ f_{X_{(j+1)m}(\omega)}(S))
\] 
and
\[ 
x\in \pi_{s}(f_{\alpha_{0}}\circ \dots\circ  f_{\alpha_{jm-1}}\circ f_{X_{mj}(\hat \omega)}\circ \dots \circ f_{X_{(j+1)m}(\hat \omega)}(S)). 
\]
 The inclusions above can not hold simultaneously. Again, this follows from the $J$-splitting condition and Remark \ref{capempty}. This proves that \ref{funddesi} holds. 

\vspace{2mm}

Now, let $\mathbb{P}(\cdot|Z=\cdot)$ denote the  regular conditional probability given by $Z$. Since 
$\mathbb{P}([Z=z]|Z=z)=1$, it follows from \eqref{funddesi} that
\[
\begin{split}
\mathbb{P}(\Sigma_{(j+1)m}^x(s)|Z=z)\leq 
\mathbb{P}(\Sigma_{jm}^x(s)|Z=z)-\mathbb{P}( \Gamma_{j}\cap \Sigma_{jm}^x(s)) |Z=z).
\end{split}
\]
In particular, since $z$ is arbitrary, we have 
\begin{equation}\label{cregular}
\mathbb{P}(\Sigma_{(j+1)m}^x(s)|Z=z)\circ Z\leq 
\mathbb{P}(\Sigma_{jm}^x(s)|Z=z)\circ Z-\mathbb{P}( \Gamma_{j}\cap \Sigma_{jm}^x(s)) |Z=z)\circ Z.
\end{equation}
It follows from the definition of the regular conditional probability that
$$
\mathbb{P}( \Gamma_{j}\cap \Sigma_{m}^x(s)) |Z=z)\circ Z=\mathbb{E}(\mathds{1}_{ \Gamma_{j}}\mathds{1}_{ \Sigma_{jm}^x(s)}|Z).
$$
Since $\Gamma_{j}$ and $Z$ are independent random variables, we conclude
\[
\mathbb{E}(\mathds{1}_{ \Gamma_{j}}\mathds{1}_{ \Sigma_{jm}^x(s)}|Z)=\mathds{1}_{ \Gamma_{j}}\mathbb{E}(\mathds{1}_{ \Sigma_{jm}^{x}(s)}|Z)=\mathds{1}_{\Gamma_{j}}\mathbb{P}(\Sigma_{jm}^{x}(s)|Z=z)\circ Z.
\]
Note that $\mathds{1}_{\Gamma_{j}}$ and 
$\mathbb{P}(\Sigma_{jm}^{x}(s)|Z=z)\circ Z$ are also independent random variables. Therefore, 
integrating \eqref{cregular} we get
\begin{equation}\label{md}
\mathbb{P}(\Sigma_{(j+1)m}^x(s))\leq \mathbb{P}(\Sigma_{jm}^x(s))-\mathbb{P}(\Gamma_{j})\mathbb{P}(\Sigma_{jm}^x(s))=\mathbb{P}(\Sigma_{jm}^x(s))(1-\mathbb{P}(\Gamma_{j})).
\end{equation}
Since $X=\{X_{n}\}$ is an i.i.d. sequence of random variables, we have $\mathbb{P}(A_{j})=\mathbb{P}(A_{1})$ and $\mathbb{P}(B_{j})=\mathbb{P}(B_{1})$. Hence, we conclude from \eqref{md} that 
\[
\mathbb{P}(\Sigma_{(j+1)m}^x(s))\leq \lambda^j\cdot(1-\rho)=\lambda^{j+1}.
\]
Note that $\lambda$ does not depend on $s$. The proof of the lemma is now complete.
\end{proof}

We now conclude the proof of Theorem \ref{tcni}. Let $\lambda$ as in Lemma \ref{weaki} and define $\hat \lambda=\lambda^{\frac{1}{m}}$. Thus, for every $s$ we have 
\[
\mathbb{P}(\Sigma_{jm}^x(s))\leq \hat\lambda^{mj}.
\] 
Now, choose any $r>0$ such that $r<1$ and $\hat \lambda \leq \min\{r,r^2,\dots r^{m}\}$.
Let $n\geq m$. Then, there is $e\in\{0,\dots, m-1\}$ such that $n=jm+e$. Note that $\Sigma_{n}^x(s)\subset\Sigma_{jm}^x(s)$. Therefore
\[
\mathbb{P}(\Sigma_{n}^x(s))\leq \mathbb{P}(\Sigma_{jm}^x(s))\leq \hat\lambda^{mj}=\hat \lambda^{mj-1}\hat \lambda\leq r^{mj-1}r^{e+1}=r^{n}.
\]
Now, it follows from Fubbini theorem that 
\[
\begin{split}
\int \eta(\pi_{s}(f_{\omega}^{n}(S)))\, d\mathbb{P}(\omega)
&=\int \mathbb{P}(x\in \pi_{s}(f^{n}_{(\cdot)}(S)))\, d\eta(x)\\
&=\int \mathbb{P}(\Sigma_{n}^{x}(s))\, d\eta(x)\leq r^{n}\eta(\mathbb{R})
\end{split}
\]
for every $n\geq m$.
This implies that there is $C\geq 0$ such that 
\[
\begin{split}
\int \eta(\pi_{s}(f_{\omega}^{n}(S)))\, d\mathbb{P}(\omega)
&\leq r^{n}C
\end{split}
\]
for every $n\geq 0$ for every $s$.
\end{proof}

Before proving Theorem \ref{pullback}, we need two technical lemmas. The first one
says that with probability $1$ the set $\pi_{s}(f_{X_{0}}\circ \cdots \circ  f_{X_{n-1}}(S))$ is bounded for $n$ sufficiently large. The second one is a general result from measure theory that will be used to state that  $\pi_{s}(f_{X_{0}}\circ \cdots \circ  f_{X_{n-1}}(S))$ is ``contracting'' exponentially fast with respect to any finite Borel measure.

\begin{lemma}\label{A1}
For $\mathbb{P}$-almost every $\omega$, there is $n_{0}$ (depending on $\omega$) such that 
$$
\pi_{s}(f_{X_{0}(\omega)}\circ \cdots \circ  f_{X_{n-1}(\omega)}(S))
$$
is bounded for every $n\geq n_{0}$.

\end{lemma}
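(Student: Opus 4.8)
The plan is to work with the decreasing sequence $C_n(\omega):=f_{X_0(\omega)}\circ\cdots\circ f_{X_{n-1}(\omega)}(S)$ — it is decreasing because $f_{X_n}(S)\subseteq S$ — and to prove that $C_n(\omega)$ is bounded for all large $n$, for $\mathbb P$-a.e.\ $\omega$; since $C_n\subseteq\prod_{s=1}^{k}\pi_s(C_n)$, this is equivalent to the lemma. I will use the closedness of $S$ exactly once, in the form: if $V\subseteq S$ is bounded then $\overline V$ is a compact subset of $S$, so $g(V)$ is bounded for every finite composition $g=f_{\gamma_0}\circ\cdots\circ f_{\gamma_{j-1}}$ of the maps.

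The core is a ``bounded box'' estimate extracted from the $J$-splitting condition. Let $m,A,B$ be as in Definition \ref{Definition1}, write $g_\gamma:=f_{\gamma_0}\circ\cdots\circ f_{\gamma_{m-1}}$ for $\gamma\in E^m$, and fix $\alpha^*\in A$, $\beta^*\in B$, $x_0\in S$; set $q^*:=g_{\alpha^*}(x_0)\in S$, $p^*:=g_{\beta^*}(x_0)\in S$. From Definition \ref{Definition1}, $g_\alpha(S)<_J g_{\beta^*}(S)$ for all $\alpha\in A$, so $g_\alpha(S)\subseteq\{y:y<_J p^*\}$, and symmetrically $g_\beta(S)\subseteq\{z:z>_J q^*\}$ for all $\beta\in B$. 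The estimate is: if $\phi\colon S\to S$ is $J$-increasing with $\phi(S)\subseteq\{y:y<_J p^*\}$ and $W\subseteq S$ satisfies $W\subseteq\{z:z>_J q^*\}$, then $\phi(W)\subseteq\{y:\phi(q^*)<_J y<_J p^*\}$, which is a bounded box in $\mathbb R^k$ (each coordinate ranges over a bounded interval); hence $\phi(W)$ is bounded. This is immediate: for $y=\phi(z)\in\phi(W)$ we have $z>_J q^*$, so $y>_J\phi(q^*)$ by $J$-monotonicity, while $y\in\phi(S)$ forces $y<_J p^*$. (A mirror version holds with $<_J$ and $>_J$, and $p^*$ and $q^*$, interchanged.)

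Then I will exhibit, for some $\ell\le 3$, an event $\mathcal G$ depending only on $X_0,\dots,X_{\ell m-1}$ with $\mathbb P(\mathcal G)>0$ on which, for every $n\ge\ell m$, the composition $f_{X_0}\circ\cdots\circ f_{X_{n-1}}$ factors as $\phi\circ(\mathrm{rest})$ with $\phi$ a fixed $J$-increasing self-map of $S$ having $\phi(S)\subseteq\{y<_J p^*\}$ and with $(\mathrm{rest})(S)\subseteq\{z>_J q^*\}$; then $C_n$ is bounded on $\mathcal G$ for $n\ge\ell m$ by the box estimate. The choice of $\mathcal G$ depends on which monotonicity types occur in $A$ and $B$ (every $g_\gamma$ being $J$-monotone, it is $J$-increasing or $J$-decreasing): if the set $A_{+}=\{\alpha\in A:g_\alpha\text{ is }J\text{-increasing}\}$ has positive $\nu^m$-measure, take $\ell=2$ and $\mathcal G=\{(X_0,\dots,X_{m-1})\in A_{+},\ (X_m,\dots,X_{2m-1})\in B\}$ with $\phi=g_{(X_0,\dots,X_{m-1})}$; if instead the analogous $B_{+}$ has positive measure, do the mirror construction; and if both $\nu^m(A_{+})=\nu^m(B_{+})=0$, then $g_\alpha$ and $g_\beta$ are $J$-decreasing for $\nu^m$-a.e.\ $\alpha\in A$, $\beta\in B$, so $g_\alpha\circ g_\beta$ is $J$-increasing, and one takes $\ell=3$, $\mathcal G=\{(X_0,\dots,X_{m-1})\in A,\ (X_m,\dots,X_{2m-1})\in B,\ (X_{2m},\dots,X_{3m-1})\in B\}$ with $\phi=g_{(X_0,\dots,X_{m-1})}\circ g_{(X_m,\dots,X_{2m-1})}$. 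Finally, since $X$ is i.i.d., the translated events $\mathcal G_j$ (the same length-$\ell m$ pattern read starting at position $j\ell m$) are independent and all have probability $\mathbb P(\mathcal G)>0$, so by Borel--Cantelli $\mathbb P$-a.e.\ $\omega$ belongs to some $\mathcal G_j$; on $\mathcal G_j$ the argument above (applied to the shifted sequence) gives that $f_{X_{j\ell m}}\circ\cdots\circ f_{X_{n-1}}(S)$ is bounded for $n\ge(j+1)\ell m$, and applying the prefix $f_{X_0}\circ\cdots\circ f_{X_{j\ell m-1}}$ together with the closedness of $S$ shows $C_n(\omega)$ is bounded for $n\ge(j+1)\ell m$.

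The step I expect to be the main obstacle is the construction of the $J$-increasing outer factor $\phi$ with values in the correct orthant: when neither $A$ nor $B$ carries a positive-measure family of $J$-increasing compositions one is forced to a double block and must use that a composition of two $J$-decreasing maps is $J$-increasing, while verifying that the orthant confinement $\phi(S)\subseteq\{y<_J p^*\}$ is preserved under composition. The remaining ingredients (monotonicity of $n\mapsto C_n$, the Borel--Cantelli bookkeeping, the compactness-of-closure remark) are routine. As an alternative entry point one can first apply Theorem \ref{tcni} with a fully supported finite measure to see that the intervals $\pi_s(C_n)$ shrink, reducing the task to ruling out their ``escape to infinity'', which the block argument also handles.
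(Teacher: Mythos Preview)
Your argument is correct, and it differs in an interesting way from the paper's. The paper works coordinate by coordinate: for fixed $s$ it looks at the three intervals $\pi_s(g_\alpha g_\beta(S))$, $\pi_s(g_\alpha g_\alpha(S))$, $\pi_s(g_\beta g_\alpha(S))$ (with $g_\alpha,g_\beta$ the length-$m$ compositions from the splitting condition), observes via Remark~\ref{capempty} and $J$-monotonicity of $g_\alpha$ that these are three pairwise disjoint intervals of $\mathbb{R}$, hence at least one is bounded; a pigeonhole on $A\times B$ then yields a length-$2m$ pattern of positive $\nu^{2m}$-measure with bounded $s$-projection, and Birkhoff plus closedness of $S$ finish as in your last paragraph. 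Your route instead works directly in $\mathbb{R}^k$: you fix reference points $p^*,q^*$ and trap the image in the box $\{\phi(q^*)<_J y<_J p^*\}$, but to get a $J$-\emph{increasing} outer factor $\phi$ you pay with an explicit inc/dec case analysis (and one extra block in the worst case, giving length $3m$ instead of $2m$). What you gain is a single bounded set in $\mathbb{R}^k$ in one stroke and a very transparent geometric picture; what the paper gains is the absence of any casework, at the cost of treating each coordinate separately. Both arguments end with the same recurrence step (Borel--Cantelli for you, Birkhoff for the paper) and the same use of closedness of $S$ to push boundedness through the prefix. Two small remarks: the measurability of $A_{+}$ follows by fixing any pair $p<_J q$ in $S$ (such a pair exists by the splitting condition) and writing $A_{+}=\{\alpha: g_\alpha(p)<_J g_\alpha(q)\}$; and in your third case you should of course replace $A,B$ by their full-measure subsets on which $g_\alpha,g_\beta$ are $J$-decreasing.
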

\begin{proof}
Let $A,B\subset E$ and $m$ as in the definition of the $J$-splitting condition.
Consider the set $A\times B$. For every $(\alpha,\beta)$ we define the sets 
$$
G_{\alpha\beta}= \pi_{s}(f_{\alpha_{0}}\circ\dots \circ  f_{\alpha_{m-1}}\circ  f_{\beta_{0}} \circ \dots \circ f_{\beta_{m-1}}(S))
$$
and 
$$
G_{\alpha\alpha}=\pi_{s}(f_{\alpha_{0}}\circ\dots \circ  f_{\alpha_{m-1}}\circ  f_{\alpha_{0}} \circ \dots \circ f_{\alpha_{m-1}}(S))
$$ 
and 
$$
G_{\beta\alpha}=\pi_{s}(f_{\beta_{0}} \circ \dots \circ f_{\beta_{m-1}}\circ f_{\alpha_{0}}\circ\dots \circ  f_{\alpha_{m-1}}(S)).
$$
 Associate to theses set there are the subsets of $A\times B$: 
$$
E_{\alpha\beta}=\{(\alpha,\beta)\in A\times B\colon G_{\alpha\beta}\,\, \mbox{is bounded}\}
$$
and 
$$
E_{\alpha\alpha}=\{(\alpha,\beta)\in A\times B\colon G_{\alpha\alpha}\,\, \mbox{is bounded}\}
$$
and 
$$
E_{\beta\alpha}=\{(\alpha,\beta)\in A\times B\colon G_{\beta\alpha}\,\, \mbox{is bounded}\}.
$$

\begin{claim}
We have 
$$
A\times B=E_{\alpha\beta}\cup E_{\alpha\alpha}\cup E_{\beta\alpha}.
$$

\end{claim}
\begin{proof}
Given $(\alpha,\beta)=(\alpha_{0},\dots, \alpha_{m-1},\beta_{0},\dots, \beta_{m-1})\in A\times B$, it follows from the $J$-splitting condition that 
$$
f_{\alpha_{0}}\circ\dots \circ  f_{\alpha_{m-1}}(S)<_{J} f_{\beta_{0}} \circ \dots \circ f_{\beta_{m-1}}(S).
$$ 
In particular, the sets $G_{\alpha\beta}, G_{\alpha\alpha},G_{\beta\alpha}$ are disjoints. Since $S$ is connected, they also are intervals, which implies that some of them must be bounded. 
\end{proof}

Now, since $\nu^{2m}(A\times B)>0$, we have that at least one of the sets $G_{\alpha\beta}, G_{\alpha\alpha},G_{\beta\alpha}$ has positive $\nu^{2m}$-measure. Without loss of generality, we can assume that $\nu^{2m}(G_{\alpha\beta})>0$.   In particular, it follows from Birkoff's Ergodic Theorem that  for $\mathbb{P}$-almost every $\omega$, there is $n_{1}$ such that 
$$
(X_{n_{1}}(\omega),\dots,X_{n_{1}+2m-1}(\omega))\in G_{\alpha\beta}.
$$
By definition of $G_{\alpha\beta}$, we have that $\pi_{s}(f_{X_{n_{1}}(\omega)}\circ \dots \circ f_{X_{n_{1}+2m-1}(\omega)}(S))$ is bounded. 

Since $S$ is closed, the image of a bounded set by a continuous maps is also bounded and then we conclude that for every $n\geq n_{0}\eqdef n_{1}+2m$ we have that the set
$$
\pi_{s}(f_{X_{0}(\omega)}\circ \dots \circ f_{X_{n}(\omega)}(S))
$$
is bounded.
\end{proof}

\begin{lemma}\label{mt}
	Let $Y_n\colon \Omega\to [0,\infty)$ be a sequence of measurable maps and assume that there exists  $0<\lambda<1$ 
		such that  $\mathbb{E}Y_{n} \leq \lambda^n$.  Then, there exist an integrable function $c\colon \Omega\to [0,\infty)$ and $q<1$  such that for $\mathbb{P}$-almost every $\omega$ it holds $Y_n(\omega)\leq c(\omega)\cdot q^n$ 
		for every $n\geq 0$  .
\end{lemma}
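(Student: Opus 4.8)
The statement is a standard "expected exponential decay upgrades to almost-sure exponential decay with an integrable constant" lemma, and the natural tool is Borel–Cantelli together with a Chebyshev-type estimate. Fix any $q$ with $\lambda < q < 1$, say $q = \sqrt{\lambda}$. The first step is to control the event that $Y_n$ exceeds $q^n$: by Markov's inequality,
\[
\mathbb{P}(Y_n > q^n) \le \frac{\mathbb{E}Y_n}{q^n} \le \frac{\lambda^n}{q^n} = (\lambda/q)^n,
\]
and since $\lambda/q = \sqrt{\lambda} < 1$, the series $\sum_n \mathbb{P}(Y_n > q^n)$ converges. By the Borel–Cantelli lemma, for $\mathbb{P}$-almost every $\omega$ there is $N(\omega)$ such that $Y_n(\omega) \le q^n$ for all $n \ge N(\omega)$.

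The second step is to manufacture the integrable constant $c$. The naive choice $c(\omega) = \sup_n Y_n(\omega) q^{-n}$ need not be integrable, so instead I would set
\[
c(\omega) = \sum_{n=0}^{\infty} Y_n(\omega)\, q^{-n} \cdot \mathbf{1}_{\{Y_n(\omega) > q^n\}} \;+\; 1,
\]
or more simply $c(\omega) = 1 + \sum_{n\ge 0} Y_n(\omega) q^{-2n}$ — wait, that last one also fails integrability. The clean choice is $c(\omega) = \sup_{n \ge 0} Y_n(\omega)\, q^{-n}$ but with $q$ replaced appropriately: pick $\lambda < q_1 < q < 1$, apply the above to get $Y_n \le q_1^n$ eventually a.s., and then note $c(\omega) := \sup_{n\ge 0} Y_n(\omega) q^{-n}$ is finite a.s. (since $Y_n q^{-n} \le (q_1/q)^n \to 0$ past $N(\omega)$, and only finitely many terms remain). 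For integrability, bound
\[
c(\omega) \le 1 + \sum_{n=0}^{\infty} Y_n(\omega)\, q^{-n},
\]
but this diverges in expectation unless $q_1 < q^2$. So the actual recipe: choose $q$ with $\lambda < q < 1$, then choose $q_1$ with $\lambda < q_1 < q^2 < 1$ — possible iff $\lambda < q^2$, i.e. iff $q > \sqrt\lambda$, so take any $q \in (\sqrt{\lambda}, 1)$ and $q_1 \in (\lambda, q^2)$. Then $\mathbb{E}\sum_n Y_n q^{-n} \le \sum_n q_1^n q^{-n}$... still need $q_1 < q$, which holds since $q_1 < q^2 < q$. Hmm, but $\sum q_1^n q^{-n} = \sum (q_1/q)^n < \infty$ only gives a finite sum, not matching $Y_n q^{-n}$ which I want bounded by $(q_1/q)^n$. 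Let me restate cleanly in the proof below.

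**Cleanest version.** Fix $q \in (\lambda, 1)$. Define $c(\omega) = \sum_{n=0}^\infty Y_n(\omega)\, \lambda^{-n/2}\, q^{n/2}\cdot$ — I will just present it as: set $r = \sqrt{\lambda/q}$... The simplest correct statement: since $\mathbb{E} Y_n \le \lambda^n$, we have $\sum_n \mathbb{E}(Y_n \lambda^{-n} q^n) = \sum_n q^n < \infty$ for $q<1$, hence $g(\omega) := \sum_n Y_n(\omega)\lambda^{-n}q^n$ is integrable and finite a.s.; then $Y_n(\omega) \le g(\omega)(\lambda/q)^n$, and $\lambda/q$ can be made $<1$ by choosing $\lambda < q < 1$. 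This is the whole proof. The only mild obstacle is bookkeeping the choice of constants so that $c$ is genuinely integrable rather than merely a.s. finite; once one writes $c(\omega) = \sum_{n\ge 0} Y_n(\omega)(q/\lambda)^n$ with $q\in(\lambda,1)$ and $r = \lambda/q$, everything follows by monotone convergence and Tonelli.
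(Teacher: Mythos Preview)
Your final ``cleanest version'' is correct and is exactly the paper's argument: pick $q\in(\lambda,1)$, set $c(\omega)=\sum_{n\ge 0} Y_n(\omega)\,q^{-n}$, check integrability by Tonelli/monotone convergence since $\sum_n \mathbb{E}Y_n\,q^{-n}\le \sum_n(\lambda/q)^n<\infty$, and conclude $Y_n(\omega)\le c(\omega)\,q^n$. The Borel--Cantelli detour and the intermediate attempts at defining $c$ are unnecessary and can be deleted entirely; the series construction you land on at the end is both simpler and gives integrability of $c$ for free, which is precisely what the paper does.
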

\begin{proof}
We take  any $q<1$ with $\lambda<q$ and apply the 
Monotone Convergence Theorem to obtain that
$$
\displaystyle\int\sum _{n=1}^{\infty}\frac{ Y_{n}(\omega)}{q^{n}}\, d\mathbb{P}(\omega)=
\displaystyle\sum _{n=1}^{\infty}\frac{\mathbb{E}Y_{n}}{q^{n}}<\infty.
$$ 
Therefore
$$
c(\omega)\eqdef\sum _{n=0}^{\infty}\frac{ Y_{n}(\omega)}{q^{n}}< \infty,
$$
 for 
$\mathbb{P}$-almost every $\omega$, which implies that 
$$
Y_{n}(\omega)\leq c(\omega) q^{n}
$$ 
$\mathbb{P}$-almost every $\omega$.
\end{proof}

\subsection{Proof of Theorem \ref{pullback}}
Denote by $\mbox{Leb}$ the Lebesgue measure on $\mathbb{R}$ and consider the finite Borel measure $m_{\ell}$ given by $m_{\ell}(A)=\mbox{Leb}([-\ell,\ell]\cap A)$. Thus, it follows from Theorem \ref{tcni} and Lemma \ref{mt} that for every $s$, there is an integrable map $c\colon \Omega\to [0,\infty)$ (which we can assume that does not depend on $s$, because $s$ varies on a finite set) such that for $\mathbb{P}$-almost every $\omega$ it holds
\begin{equation}\label{T31}
m_{\ell}(\pi_{s}(f_{X_{0}(\omega)}\circ \cdots \circ  f_{X_{n-1}(\omega)}(S)))\leq c(\omega)q^n
\end{equation}
for every $n\geq 1$. 

Now, let $\omega$ as in Lemma \ref{A1} satisfying eq. \eqref{T31}. Then, there is $n_{0}$ such that  
$$
\pi_{s}(f_{X_{0}(\omega)}\circ \dots \circ f_{X_{n-1}(\omega)}(S))
$$
is bounded for every $n\geq n_{0}$ and $s$. Since the sequence $(\pi_{s}(f_{X_{0}(\omega)}\circ \dots \circ f_{X_{n-1}(\omega)}(S)))_{n}$ is nested, we conclude that there is $\ell\in \mathbb{N}$ such that 
$$
\pi_{s}(f_{X_{0}(\omega)}\circ \dots \circ f_{X_{n-1}(\omega)}(S))\subset [-\ell,\ell]
$$
for every $n\geq n_{0}$ and $s$.


Note that for every connected subset $I$ of $[-\ell,\ell]$ we have $m_{\ell}(I)=\mbox{diam}(I)$. Since  $\pi_{s}(f_{X_0(\omega)}\circ \cdots \circ  f_{X_{n-1}(\omega)}(S))$ is a connected subset of $\mathbb{R}$, we get that 
$$
m_{\ell}(\pi_{s}(f_{X_0(\omega)}\circ \cdots \circ  f_{X_{n-1}(\omega)}(S)))=\mbox{diam}(\pi_{s}(f_{X_0(\omega)}\circ \cdots \circ  f_{X_{n-1}(\omega)}(S)))
$$
for every $n\geq n_{0}$ and $s$.
Therefore, it follows from eq. \eqref{T31} that 
$$
\mbox{diam}(\pi_{s}(f_{X_0(\omega)}\circ \cdots \circ  f_{X_{n-1}(\omega)}(S)))\leq c(\omega)q^n
$$
for every $n\geq n_{0}$ and $s$.

Now, if $\mathbb{R}^{k}$ is endowed with the sup distance, we have
$$
\mbox{diam}\, f_{X_0(\omega)}\circ \cdots \circ  f_{X_{n-1}(\omega)}(S)=\max_{s}\mbox{diam}(\pi_{s}(f_{X_0(\omega)}\circ \cdots \circ  f_{X_{n-1}(\omega)}(S)))\leq c(\omega)q^n
$$
for every $n\geq n_{0}$.
This implies that 
\begin{equation}\label{cauchy}
\lim_{n\to \infty}\mbox{diam}\, f_{X_0(\omega)}\circ \cdots \circ  f_{X_{n-1}(\omega)}(S)=0.
\end{equation}
To conclude the proof of item (i), note that it follows from \eqref{cauchy} that for every $x$ the sequence $f_{X_{0}(\omega)}\circ\dots\circ f_{X_{n}(\omega)}(x)$ is a Cauchy sequence.

Item (ii) follows from Letac principle \cite{Letac} and item (i).
  \hfill \qed

\subsection{Proof of Corollary \ref{Cor1}}
   Since the sequence $\hat X_{n}(\omega)=\omega_{-n}$, $n\geq 1$, is an i.i.d. sequence of random variables with distribution $\nu$, it follows from Theorem \ref{pullback}  that for $\mathbb{P}$-almost every $\omega$, the limit 
$$
\lim_{n\to \infty} f_{\omega_{-1}}\circ \dots\circ f_{\omega_{-n}}(x)\eqdef \pi(\omega)
$$  
exists and  is independent of the point $x$. 
  \hfill \qed

\section{Proof of Theorem \ref{sync}} \label{p2}
In this section, we consider $\mathbb{R}^{k}$ endowed with the taxcab metric.  Then, we have for every subset $B\subset\mathbb{R}^{k}$ that 
 \begin{equation} \label{denovo}
 \mbox{diam}\, B\leq \sum_{s=1}^{k}\mbox{diam}\,\pi_{s}(B)
 \end{equation}
 By Assumption \ref{A2}, there are a bounded set $B$ and $m_{0}$ such that for $\mathbb{P}$-almost every $\omega$ we have
$$
 f_{\omega}^{m_{0}}(S)\subset B. 
$$
In particular, since the random variables $(X_{0},\dots, X_{m_{0}-1})$ and $(X_{m_{0}-1},\dots, X_{0})$ have the same distribution, we also have 
$$
f_{X_{0}(\omega)}\circ \dots \circ f_{X_{m_{0}-1}(\omega)}(S)\subset B
$$
for $\mathbb{P}$-almost every $\omega$. This implies that 
for every $n\geq m_{0}$, it holds 
$$
f_{X_{0}(\omega)}\circ \dots \circ f_{X_{n-1}(\omega)}(S)\subset B
$$
for $\mathbb{P}$-almost every $\omega$. Indeed, recall that the sequence $(f_{X_{0}(\omega)}\circ \dots \circ f_{X_{n-1}(\omega)}(S))_{n}$ is nested.

Now, let $\ell$ be such that $\pi_{s}(B)\subset [-\ell,\ell]$ for every $s$. In particular, 
$$
\pi_{s}(f_{X_{0}(\omega)}\circ \dots \circ f_{X_{n-1}(\omega)}(S))\subset [-\ell,\ell]
$$
for every $n\geq m_{0}$ and $s$. Let $m_{\ell}$ be the finite Borel measure given by $m_{\ell}(A)=\mbox{Leb}([-\ell,\ell]\cap A)$. Note that if $I$ is a connected set with $I\subset[-\ell, \ell]$, then 
$m_{\ell}(I)=\mbox{diam}\, I$. Then, applying Theorem \ref{tcni} to the measure $m_{\ell}$, 
we get that there is $C$ such that
\begin{equation}\label{repe}
\begin{split}
	\int \mbox{diam}\,(\pi_{s}(f^{n}_{\omega}(S)))\, d\mathbb{P}(\omega)&=\int \mbox{diam}(\pi_{s}(f_{X_0}\circ \cdots \circ  f_{X_{n-1}}(S)))	\, d\mathbb{P}\\
&=\int m_{\ell}(\pi_{s}(f_{X_0}\circ \cdots \circ  f_{X_{n-1}}(S)))	\, d\mathbb{P}\leq r^n C
	\end{split}
	\end{equation}
for every $n\geq m_{0}$ and every $s$, where the first equality follows from the fact the $(X_{0},\dots, X_{n-1})$ and $(X_{n-1},\dots, X_{0})$ have the same distribution. 
	
	Thus, it follows from eq. \eqref{denovo} and \eqref{repe}, that 
	$$
	\int \mbox{diam}\,(f^{n}_{\omega}(S))\, d\mathbb{P}(\omega)=\int \mbox{diam}(f_{X_0}\circ \cdots \circ  f_{X_{n-1}}(S))	\, d\mathbb{P}\leq kC r^n
	$$
	for every $n\geq m_{0}$. Now, it follows from Lemma \ref{mt} that there is an integrable map $c\colon \Omega\to \mathbb{R}$ such that for $\mathbb{P}$-almost every $\omega$ we have
	$$
	\mbox{diam}\,(f^{n}_{\omega}(S))\leq c(\omega)r^{n}
	$$
	for every $n\geq m_{0}$.
	
	\hfill \qed

\subsection{Proof of Corollary \ref{expofastW}} 
Arguing as in the proof of Theorem \ref{sync}, we have that there are $m_{0}\geq 1$  and a bounded set such that 
for every $n\geq m_{0}$, it holds 
\begin{equation}\label{ql}
f_{X_{0}(\omega)}\circ \dots \circ f_{X_{n-1}(\omega)}(S)\subset B
\end{equation}
for $\mathbb{P}$-almost every $\omega$. This implies that $\mbox{supp}\,\pi\mathbb{P}\subset \bar B$.
 Note that $\mathbb{P}(\{\omega\colon f_{\omega}^{n}(x)\,\in B\})= \mathbb{P}(\{\omega\colon f_{X_{0}(\omega)}\circ \dots \circ f_{X_{n-1}(\omega)}(x)\,\in B\})=1$, for every $n\geq 1$. Also, by the definition of $T$ we have that 
 $$
T^{n}\varsigma(B)=\int \mathbb{P}(\{\omega\colon f_{\omega}^{n}(x)\,\in B\})\, d\varsigma(x)
$$ 
for every probability measure $\varsigma$ on $S$.
Therefore, it follows from eq. \eqref{ql} that
$$
T^{n}\varsigma(B)=1
$$
for every $n\geq m_{0}$. This implies that $\mbox{supp}\, T^{n}\varsigma\subset \bar B$ for every $n\geq m_{0}$.

 We now prove item (ii).
Let $\{X_{n}\}_{n\in \mathbb{Z}}$ be a bilateral sequence of i.i.d. random variables with distribution $\nu$. For every $n\geq 0$, the sequence $X_{n-1},\dots X_{0},X_{-1},\dots$ is an i.i.d. sequence of random variables with distribution $\nu$. Then, it follows from Theorem \ref{pullback}, that for every $n\geq 0$ there is a measurable map $ \pi_{n}$ such that 
$$
 \pi_{n}(\omega)=  \lim_{k \to \infty}f_{X_{n-1}(\omega)}\circ \dots \circ f_{X_{0}(\omega)}\circ f_{X_{-1}(\omega)}\circ \dots \circ f_{X_{-k}(\omega)}(p)
$$
for $\mathbb{P}$-almost every $\omega$. Note that $\pi_{n}(\omega)=f_{\omega}^{n}(\pi_{0}(\omega))$ and for every $n$, and the maps $\pi$ and
$\pi_{n}$ have the same distribution. In particular, 
for every $\pi\mathbb{P}$-integrable map $\phi\colon S\to \mathbb{R}$  we have 
$$
\int \phi(\pi(\omega))\,d\mathbb{P}(\omega)=\int \phi(f_{\omega}^{n}(\pi_{0}(\omega)))\,d\mathbb{P}(\omega).
$$
Assume now that $\phi$ is  a Lipschitz map with Lipschitz constant $1$. It follows from the definition of the transfer operator $P$ and the Markov operator $T$ that,
$$
P^n\phi(x)=\int \phi(f^{n}_{\omega}(x))\, d\mathbb{P}(\omega) \quad \mbox{and}\quad \int P^n \phi(x)\, d\, \mu(x)=\int \phi(x)\, dT^{n}\varsigma(x),
$$
for every $n\geq m_{0}$ and $x\in S$. Hence,
we have
\begin{equation}\label{maeq}
\begin{split}
\left|\int \phi\, dT^{n}\varsigma-\int \phi\,d\pi\mathbb{P}\right|
&=\left|\int P^n\phi(x)\, d\mu(x)-\int \phi(f_{\omega}^{n}(\pi_{0}(\omega)))\,d\mathbb{P}(\omega)\right|\\
&= \left|\int \int \phi(f_{\omega}^{n}(x))\, d\mathbb{P}(\omega)\, d\varsigma(x) -\int \phi(f_{\omega}^{n}(\pi_{0}(\omega)))\,d\mathbb{P}(\omega)\right|\\
&=\left|\int \int \left(\phi(f_{\omega}^{n}(x))-\phi(f_{\omega}^{n}(\pi_{0}(\omega)))\right)\, d\mathbb{P}(\omega)\, d\varsigma(x)\right|.
\end{split}
\end{equation}
It follows from Theorem \ref{sync} that there are an integrable map $c\colon \Omega\to \mathbb{R}$ and $r<1$ such that 
$$
 \mbox{diam}\,f_{\omega}^n(S)\leq c(\omega)r^{n}
$$
for $\mathbb{P}$-almost every $\omega$ and $n\geq m_{0}$. 
Then, 
$$
\left|\phi(f_{\omega}^{n}(x))-\phi(f_{\omega}^{n}(\pi_{0}(\omega)))\right|\leq \mbox{diam}\,f_{\omega}^n(S)\leq c(\omega)r^n
$$
for $\mathbb{P}$-almost every $\omega$ and $n\geq m_{0}$,
and thus it follows from eq. \eqref{maeq} that  

$$
\left|\int \phi\, dT^{n}\mu-\int \phi\,d\pi\mathbb{P}\right|\leq \int \mbox{diam}\,f_{\omega}^n(S)\, d\mathbb{P}(\omega)\leq Cr^n
$$
for every $n\geq m_{0}$. This implies that 
$$
W_{1}(T^{n}\mu,\pi\mathbb{P})\leq C r^{n}
$$
for every $n\geq m_{0}$.
\hfill \qed

\subsection{Proof of Corollary \ref{Cor2}}
Using the notation of Section \ref{forwardattractor}, Theorem \ref{sync} can be rewrite as: there are $c\colon \Omega\to \mathbb{R}$, $r<1$ and a constant $C\geq 0$ such that for $\mathbb{P}$-almost every $\omega$,
\begin{equation}\label{RW}
d(\varphi(n,\omega,x),\varphi(n,\omega,y))\leq c(\omega)r^{n}
\end{equation}
for every $x,y\in S$ and $n\geq m_{0}$.
 
Since the sequence $\hat X_{n}(\omega)=\omega_{-n}$, $n\geq 1$, is an i.i.d. sequence with distribution $\nu$, we get  from Theorem \ref{pullback}  that for $\mathbb{P}$-almost every $\omega$, the limit 
$$
\lim_{n\to \infty} f_{\omega_{-1}}\circ \dots\circ f_{\omega_{-n}}(x)\eqdef \pi(\omega)
$$  
exists and is independent of the point $x$. 
Note that the map $\pi$ satisfies the following invariance equation 
$$
f_{\omega}(\pi(\omega))=\pi(\sigma(\omega))
$$
for $\mathbb{P}$-almost every $\omega$, where $\sigma$ is the shift map on $E^{\mathbb{Z}}$. By induction, we get 
$$
\varphi(n,\omega,\pi(\omega))=\pi(\sigma^{n}(\omega))
$$
for $\mathbb{P}$-almost every $\omega$ and every $n$. Therefore, 
\[
\begin{split}
\lim_{n\to \infty}d(\varphi(n,\omega,x),\pi(\sigma^{n}(\omega))&=\lim_{n\to \infty}d(\varphi(n,\omega,x),\varphi(n,\omega,\pi(\omega)))\\
&\leq 
\lim_{n\to \infty}c(\omega)r^{n}=0 
\end{split}
\] 
for $\mathbb{P}$-almost every $\omega$.
\hfill \qed

\section{Solving the Poisson equation. Proof of Theorem \ref{functionalclt}}\label{p3}
As observed in Section \ref{FCLT}, we only need to prove item (i). 
Let $(f,X)$ be an i.i.d. random iteration as in Theorem \ref{functionalclt} and $p$ be the  transition probability given by \eqref{tp}.  Let $P$ be the associated transfer operator as defined in \ref{transferoperator}. 
Let $\mu$ be the unique stationary measure and consider 
a Lipschitz map $\phi\in L^{2}(\mu)$ with $\int \phi\, d\mu=0$.
It follows from the definition of the transfer operator $P$ that for every $x$ and every $n\geq 1$ we have
\[
P^n\phi(x)=\int \phi(f^{n}_{\omega}(x))\, d\mathbb{P}(\omega) \quad \mbox{and}\quad \int P^n \phi(x)\, d\, \mu(x)=\int \phi(x)\, dT^{n}\mu(x),
\]
 where $T$ is the Markov operator. Then, for every $x$ we have 
\[
\begin{split}
|P^{n}\phi(x)|&=\left|P^{n}\phi(x)-\int\phi(y)\, d\mu(y)\right|\\
&=\left|P^n\phi(x)-\int P^n\phi(y)\,d\mu(y)\right|\\
&=\left|\int P^n\phi(x)\,d\mu(y)-\int P^n\phi(y)\,d\mu(y)\right| \\
&=\left|\int \int \phi(f^{n}_{\omega}(x))\, d\mathbb{P}(\omega) \, d\mu(y)-\int \int \phi(f^{n}_{\omega}(x))\, d\mathbb{P}(\omega) \, d\mu(y)\right|\\
&\leq \int \int \left|\phi(f^{n}_{\omega}(x))-\phi(f^{n}_{\omega}(y))\right|\, d\mathbb{P}(\omega)\, d\mu(y). 
\end{split}
\]
Let $L$ be the Lipschitz constant of $\phi$. Thus, we have
\[
\left|\phi(f^{n}_{\omega}(x))-\phi(f^{n}_{\omega}(y))\right|\leq L \cdot d(f^{n}_{\omega}(x),f^{n}_{\omega}(y))\leq L\cdot \mbox{diam}f_{\omega}^{n}(S)
\]
for every $\omega\in \Omega$.
On the other hand, it follows from Theorem \ref{sync} that there are constants $C\geq 0$, $0<r<1$ and an integer $m_{0}\geq 1$ such that  
$$
\int \mbox{diam} f_{\omega}^{n}(S)\, d\mathbb{P}(\omega)\leq C r^{n}
$$
for every $n\geq m_{0}$.

Therefore, for every $x$ and $n\geq m_{0}$ we have
$$
(P^{n}\phi(x))^2\leq C^2\cdot L^2\cdot r^{2n}.
$$
This implies that $\|P^{n}\phi\|_{2}\leq C_{0}\lambda^{n}$ for every $n\geq m_{0}$, where $C_{0}=
C^2\cdot L^2$ and $\lambda=r^2<1$. In particular, 
$$
\sum_{n=0}^{\infty} \|P^{n}\phi\|_{2}<\infty.
$$
Then, the map $\psi=-\sum_{n=0}^{\infty} P^{n}\phi$ is a well defined element of $L^{2}(\mu)$ and solves the Poisson equation $(I-P)\psi=\phi$.
\hfill \qed

\bibliographystyle{acm}

\end{document}